\newtheorem{theorem}{Theorem}
\newtheorem{corollary}[theorem]{Corollary}
\newtheorem{lemma}[theorem]{Lemma}
\newtheorem{proposition}[theorem]{Proposition}
\newtheorem{definition}[theorem]{Definition}
\theoremstyle{definition}
\newtheorem{example}[theorem]{Example}
\newtheorem{remark}[theorem]{Remark}
\numberwithin{theorem}{section}
\numberwithin{equation}{section}
\numberwithin{table}{section}
\newcommand{\bb}{\mathbb}
\newcommand{\Z}{\bb{Z}}
\newcommand{\R}{\bb{R}}
\newcommand{\cC}{\mathcal{C}}
\newcommand{\cP}{\mathcal{P}}
\newcommand{\cQ}{\mathcal{Q}}
\newcommand{\de}{\delta}
\newcommand{\De}{\Delta}
\newcommand{\Ga}{\Gamma}
\newcommand{\La}{\Lambda}
\newcommand{\var}{\varphi}
\newcommand{\Sig}{\Sigma}
\newcommand{\sig}{\sigma}
\newcommand{\pr}{\prime}
\newcommand{\sm}{\setminus}
\newcommand{\es}{\emptyset}
\newcommand{\ol}{\overline}
\newcommand{\lan}{\langle}
\newcommand{\ran}{\rangle}
\newcommand{\wti}{\widetilde}
\newcommand{\wh}{\widehat}
\newcommand{\Aut}{\text{Aut}}
\newcommand{\GR}{\text{GR}}
\newcommand{\Sym}{\text{Sym}}
\newcommand{\wt}{\text{wt}}
\newcommand{\sqm}{\sqrt{-1}}
\newcommand{\covol}{\text{covol}}
\newcommand{\vol}{\text{vol}}
\long\def\symbolfootnote[#1]#2{\begingroup%
\def\thefootnote{\fnsymbol{footnote}}\footnote[#1]{#2}\endgroup}
\begin{document}

\title{Constructions of Primitive Formally Dual Pairs Having Subsets with Unequal Sizes}
\author{Shuxing Li \and Alexander Pott}
\date{}
\maketitle

\symbolfootnote[0]{
S.~Li and A.~Pott are with the Faculty of Mathematics, Otto von Guericke University Magdeburg, 39106 Magdeburg, Germany (e-mail: shuxing.li@ovgu.de, alexander.pott@ovgu.de).
}

\begin{abstract}
The concept of formal duality was proposed by Cohn, Kumar and Sch\"urmann, which reflects a remarkable symmetry among energy-minimizing periodic configurations. This formal duality was later on translated into a purely combinatorial property by Cohn, Kumar, Reiher and Sch\"urmann, where the corresponding combinatorial objects were called formally dual pairs. Motivated by this surprising application on the energy minimization problem, we focus on the algebraic constructions of primitive formally dual pairs. It is worthy noting that almost all known examples of primitive formally dual pairs satisfy that the two subsets have the same size. Indeed, prior to this work, there was only one known example derived form computer search, which had subsets with unequal sizes in $\Z_2 \times \Z_4^2$. Inspired by this example, we propose a lifting construction framework and a recursive construction framework, which generate new primitive formally dual pairs from known ones. As an application, for $m \ge 2$, we obtain $m+1$ pairwise inequivalent primitive formally dual pairs in $\Z_2 \times \Z_4^{2m}$, which have subsets with unequal sizes.

\smallskip
\noindent \textbf{Keywords.} Character sum, energy minimization, formal duality, inequivalence, lifting construction, periodic configuration, primitive formally dual pair, recursive construction.

\end{abstract}

\section{Introduction}

Let $\cC$ be a particle configuration in the Euclidean space $\R^n$. Let $f: \R^n \rightarrow \R$ be a potential function, which is used to measure the energy possessed by $\cC$. The energy minimization problem aims to find configurations $\cC \subset \R^n$ with a fixed density, whose energy is minimal with respect to a potential function $f$. In physics, the energy minimization problem amounts to find the ground states in a given space, with respect to a prescribed density and potential function. This problem is of great interest and notoriously difficult in general \cite[Section I]{CKS}. For instance, the famous sphere packing problem can be viewed as an extremal case of the energy minimization problem \cite[p. 123]{CKRS}.

In 2009, Cohn, Kumar and Sch\"{u}rmann considered a weaker version of the energy minimization problem, where the configurations under consideration are restricted to so called periodic configurations \cite{CKS}. A periodic configuration is formed by a union of finitely many translations of a lattice. For instance, let $\La$ be a lattice in $\R^n$, then $\cP=\bigcup_{i=1}^N (v_i+\La)$ is a periodic configuration formed by $N$ translations of $\La$. The \emph{density} of $\cP$ is defined to be $\de(\cP)=N/\covol(\La)$, where $\covol(\La)=\vol(\R^n/\La)$ is the volume of a fundamental domain of $\La$. Given a potential function $f: \R^n \rightarrow \R$, define its Fourier transformation
$$
\wh{f}(y)=\int_{\R^n}f(x)e^{-2\pi i\lan x,y \ran}dx,
$$
where $\lan \cdot, \cdot \ran$ is the inner product in $\R^n$. The potential functions involved in \cite{CKS} belong to the class of Schwartz function, so that their Fourier transformations are well-defined. For a Schwartz function $f: \R^n \rightarrow \R$ and a periodic configuration $\cP=\bigcup_{j=1}^N(v_j+\La)$ associated with a lattice $\La \subset \R^n$, define the \emph{average pair sum} of $f$ over $\cP$ as
$$
\Sig_f(\cP)=\frac{1}{N}\sum_{j,\ell=1}^N\sum_{x \in \La}f(x+v_j-v_\ell),
$$
which is used to measure the energy possessed by the periodic configuration $\cC$ with respect to the potential function $f$.

Based on numerical experiments, Cohn et al. observed that each energy-minimizing periodic configuration obtained in their simulations possesses an unexpected symmetry called formal duality \cite[Section VI]{CKS}. More precisely, if $\cP$ is an energy-minimizing periodic configuration, then numerous experiments suggested that there exists a periodic configuration $\cQ$, so that for \emph{each} Schwartz function $f$, we have
\begin{equation}\label{def-formaldual}
\Sig_f(\cP)=\de(\cP)\Sig_{\wh{f}}(\cQ).
\end{equation}
If two periodic configurations $\cP$ and $\cQ$ satisfy \eqref{def-formaldual} for each Schwartz function $f$, then they are called formally dual to each other \cite[Definition 2.1]{CKRS}. This formal duality among periodic configurations revealed a deep symmetry which has not been well understood.

Remarkably, Cohn, Kumar, Reiher and Sch\"{u}rmann realized that formal duality among a pair of periodic configurations can be translated into a purely combinatorial property \cite[Theorem 2.8]{CKRS}. Indeed, they introduced the concept of formally dual pairs in finite abelian groups, which is a combinatorial counterpart of formal duality \cite[Definition 2.9]{CKRS}. Let $\La \subset \R^n$ be a lattice with a basis containing $n$ vectors. The dual lattice of $\La$ is defined as
$$
\La^*=\{x \in \R^n \mid \lan x, y \ran \in \Z, \forall y \in \La \},
$$
in which $\lan \cdot, \cdot \ran$ is the inner product in $\R^n$. Let $\cP=\bigcup_{j=1}^N(v_j+\La)$ and $\cQ=\bigcup_{j=1}^M(w_j+\Ga)$ be two periodic configurations. Define $\cP-\cP$ to be the subset $\{ x-y \mid x,y \in \cP \}$. Suppose $\cP-\cP \subset \Ga^*$ and $\cQ-\cQ \subset \La^*$. Then, as observed in \cite[p. 129]{CKRS}, the two quotient groups $\Ga^*/\La$ and $\La^*/\Ga$ satisfy that $\Ga^*/\La \cong \La^*/\Ga \cong G$, where $G$ is a finite abelian group. Moreover, the two sets $S=\{v_j \mid 1 \le j \le N\}$ and $T=\{w_j \mid 1 \le j \le M\}$ can be regarded as subsets of $G$, so that $S$ corresponds to $\cP$ and $T$ corresponds to $\cQ$. Cohn et al.'s key observation was that, $\cP$ and $\cQ$ are formally dual if and only if $S$ and $T$ form a formally dual pair in $G$ (see Definition~\ref{def-iso} for the concept of formally dual pairs).
Consequently, the formal duality among periodic configurations $\cP$ and $\cQ$ was reduced to the property of a pair of subsets $S$ and $T$ in a finite abelian group $G$.  Next, we give an illustrative example, describing how to derive a formally dual pair from a pair of formally dual periodic configurations.

\begin{example}
Let $n$ be a positive integer with $n \equiv 1 \bmod 4$. Let
$$
D_n=\{(x_1,x_2,\ldots,x_n) \in \Z^n \mid \sum_{i=1}^n x_i \equiv 0 \bmod2\}
$$
be the checkerboard lattice in $\R^n$. Set
$$
v_0=(0,0,\ldots,0), \quad v_1=(\frac12,\frac12,\ldots,\frac12), \quad v_2=(0,0,\ldots,0,1), \quad v_3=(\frac12,\frac12,\ldots,\frac12,-\frac12).
$$
Define a periodic configuration $D_n^+=D_n \cup (v_1+D_n)$ associated with the lattice $D_n$. Since $n \equiv 1 \pmod 4$, the two periodic configurations $D_{n}^+$ and $D_{n}^+$ are formally dual to each other \cite[Proposition 1]{CKS}.

Next, we derive a formally dual pair in $\Z_4$, corresponding to the pair of formally dual periodic configurations $D_{n}^+$ and $D_{n}^+$. Note that the dual lattice $D_n^*=\bigcup_{i=0}^3 (v_i+D_n)$ \cite[Chapter 4, Section 7.4]{CS}, we can easily verify that the quotient group $D_{n}^*/D_{n} \cong \Z_4=\lan g \ran$, where $v_1+D_n$ and $v_3+D_n$ have order $4$ in the group $D_{n}^*/D_{n}$. Now we can construct a subset $S$ of $\Z_4$ corresponding to the periodic configuration $D_{n}^+$ in the following way. Note that there exists a group isomorphism $\phi: D_{n}^*/D_{n} \rightarrow \Z_4$, such that $\phi(v_i+D_{n})=g^i$ for each $0 \le i \le 3$. We can identify $v_i$ with the element $v_i+D_{n}$ in the quotient group $D_{n}^*/D_{n}$. Recall that $v_0$ and $v_1$ are the two translations of $D_{n}$ forming $D_{n}^+$. Therefore, we obtain a subset $S=\{\phi(v_0+D_{n}), \phi(v_1+D_{n})\}=\{1,g\} \subset \Z_4$ corresponding to $D_{n}^+$.

Similarly, since the quotient group $D_{n}^*/D_{n} \cong \Z_4=\lan g \ran$, we can construct a subset $T$ of $\Z_4$ corresponding to the periodic configuration $D_{n}^+$ in the following way. We know that there exists a group isomorphism $\phi^{\pr}: D_{n}^*/D_{n} \rightarrow \Z_4$, such that $\phi^{\pr}(v_i+D_{n})=g^{3i}$ for each $0 \le i \le 3$. Recall that $v_0$ and $v_1$ are the two translations of $D_{n}$ forming $D_{n}^+$. Therefore, we obtain a subset $T=\{\phi^{\pr}(v_0+D_{n}), \phi^{\pr}(v_1+D_{n})\}=\{1,g^3\} \subset \Z_4$ corresponding to $D_{n}^+$.

The two subsets $S=\{1,g\}$ and $T=\{1,g^3\}$ form a formally dual pair in the group $\Z_4=\lan g \ran$ corresponding to the pair of formally dual periodic configurations $D_{n}^+$ and $D_{n}^+$. We remark that although the choices of the group isomorphisms $\phi$ and $\phi^{\pr}$ are not unique, different choices lead to equivalent formally dual pairs in the sense of Definition~\ref{def-equiv} below.
\end{example}

To sum up, formally dual pairs offer a fresh viewpoint towards the energy-minimizing periodic configurations, in which the combinatorial approaches come into play. Let $S=\{v_j \mid 1 \le j \le N\}$ and $T=\{w_j \mid 1 \le j \le M\}$ be a formally dual pair in a finite abelian group $G$. Then for each pair of lattices $\La$ and $\Ga$, satisfying $\Ga^*/\La \cong \La^*/\Ga \cong G$, we have that $\cP=\bigcup_{j=1}^N (v_j+\La)$ and $\cQ=\bigcup_{j=1}^M (w_j+\Ga)$ are formally dual periodic configurations. Hence, from a formally dual pair $S$ and $T$ in $G$, we derive $\cP$ and $\cQ$, which are two candidates of energy-minimizing periodic configurations. On the other hand, let $\La$ and $\Ga$ be two lattices such that $\Ga^*/\La \cong \La^*/\Ga \cong G$, where $G$ is a finite abelian group. Let $\cP$ be a periodic configuration associated with the lattice $\La$ and $\cQ$ be a periodic configuration associated with the lattice $\Ga$, such that $\cP-\cP \subset \Ga^*$ and $\cQ-\cQ \subset \La^*$. Then the nonexistence of formally dual pairs in $G$ implies that $\cP$ and $\cQ$ are not formally dual. Hence, the nonexistence of formally dual pairs in one finite abelian group $G$ rules out infinitely many potential pairs of formally dual periodic configurations and the arguments involved are purely combinatorial.

Below, we give a brief summary of known results about formally dual pairs. Some initial results were included in the pioneering work \cite{CKRS,CKS}. A main conjecture due to Cohn, Kumar, Reiher and Sch\"{u}rmann \cite[p. 135]{CKRS}, states that there are no primitive formally dual pairs in cyclic groups, except two small examples (see Definition~\ref{def-primi} for the concept of primitive formally dual pairs). This conjecture was proved for cyclic groups of prime power order, where Sch\"uler confirmed the odd prime power case \cite{Sch} and Xia confirmed the even prime power case \cite{Xia}. When the order of the cyclic group is a product of two prime powers, Malikiosis showed that the conjecture holds true in many cases \cite{Mali}. In \cite[Section 4.2]{LPS}, the authors proposed a new viewpoint towards the conjecture, by building a connection between the two known examples of primitive formally dual pairs in cyclic groups and cyclic relative difference sets. Besides, a systematic study of formally dual pairs in finite abelian groups was presented in \cite{LPS}, which contains constructions, classifications, nonexistence results and enumerations.

Let $S$ and $T$ be a primitive formally dual pair in $G$. Almost all known primitive formally dual pairs satisfy $|S|=|T|$. Indeed, there was only one known exception in \cite[Example 3.22]{LPS}, which gave a primitive formally dual pair $S$ and $T$ in $\Z_2 \times \Z_4^2$, with $|S|=4$ and $|T|=8$. This example motivates us to consider the algebraic construction of primitive formally dual pairs having subsets with unequal sizes. In fact, when $m \ge 2$, we construct $m+1$ pairwise inequivalent primitive formally dual pairs in $\Z_2 \times \Z_4^{2m}$. Our constructions are build upon a lifting construction framework and a recursive construction framework, which produce new primitive formally dual pairs from known ones.

The rest of the paper is organized as follows. In Section~\ref{sec2}, we give a brief introduction to formally dual pairs. A lifting construction framework is presented in Section~\ref{sec3}. Applying this lifting construction framework in Section~\ref{sec4}, we derive a direct construction of primitive formally dual pairs in $\Z_2 \times \Z_4^{2m}$, which leads to the first infinite family having subsets with unequal sizes. In Section~\ref{sec5}, we propose a recursive construction framework. Applying the recursive construction framework in Section~\ref{sec6}, we give the second infinite family in $\Z_2 \times \Z_4^{2m}$. Moreover, using the recursive construction framework, we can combine these two infinite families to generate new primitive formally dual pairs in $\Z_2 \times \Z_4^{2m}$. As a consequence, for $m \ge 2$, there are at least $m+1$ pairwise inequivalent primitive formally dual pairs in $\Z_2 \times \Z_4^{2m}$. Section~\ref{sec7} concludes the paper.

\section{Preliminaries}\label{sec2}

Throughout the paper, we always consider finite abelian groups $G$. Let $A_1$ and $A_2$ be two subsets of a group $G$. For each $y \in G$, define the \emph{weight enumerator} of $A_1$ and $A_2$ at $y$ as
$$
\nu_{A_1,A_2}(y)=|\{(a_1,a_2) \in A_1 \times A_2 \mid y=a_1a_2^{-1}\}|.
$$
When $A_1=A_2$, we simply write $\nu_{A_1,A_2}(y)$ as $\nu_{A_1}(y)$.

We use $\Z[G]$ to denote the group ring. For $A \in \Z[G]$ with  nonnegative coefficients, we use $\{A\}$ to denote the underlying subset of $G$ corresponding to the elements of $A$ with positive coefficients and $[A]$ the multiset corresponding to $A$. For $A \in \Z[G]$ and $g \in G$, we use $[A]_g$ to denote the coefficient of $g$ in $A$. Suppose $A=\sum_{g \in G} a_gg \in \Z[G]$, then $A^{(-1)}$ is defined to be $\sum_{g \in G} a_gg^{-1}$. Suppose $A=\sum_{g \in G} a_gg \in \Z[G]$ and $B=\sum_{g \in G} b_gg \in \Z[G]$, then the product $AB$ is defined to be $\sum_{g \in G} (\sum_{h \in G}a_{gh^{-1}}b_h) g$. A \emph{character} $\chi$ of $G$ is a group homomorphism from $G$ to the multiplicative group of the complex field $\mathbb{C}$. For a group $G$, we use $\wh{G}$ to denote its character group. There exists a group isomorphism $\De:G \rightarrow \wh{G}$, such that for each $y \in G$, we have $\chi_y:=\De(y) \in \wh{G}$. Therefore, $\wh{G}=\{\chi_y \mid y \in G\}$. A character $\chi \in \wh{G}$ is \emph{principal}, if $\chi(g)=1$ for each $g \in G$. A character $\chi \in \wh{G}$ is principal on a subgroup $H \leqslant G$, if $\chi(h)=1$ for each $h \in H$. For $\chi \in \wh{G}$ and $A=\sum_{g \in G} a_gg \in \Z[G]$, we use $\chi(A)$ to denote the character sum $\sum_{g \in G} a_g\chi(g)$. For a more detailed treatment of group rings and characters, please refer to \cite[Chapter 1]{Pott95}.

Now we are ready to introduce the definition of formally dual pairs.

\begin{definition}[Formally dual pair]\label{def-iso}
Let $\De$ be a group isomorphism from $G$ to $\wh{G}$, such that $\De(y)=\chi_y$ for each $y \in G$. Let $S$ and $T$ be subsets of $G$. Then $S$ and $T$ form a formally dual pair in $G$ under the isomorphism $\De$, if for each $y \in G$,
\begin{equation}\label{eqn-def}
|\chi_y(S)|^2=\frac{|S|^2}{|T|}\nu_T(y).
\end{equation}
\end{definition}

\begin{remark}\label{rem-def}
\quad
\begin{itemize}
\item[(1)] According to \cite[Remark 2.10]{CKRS}, the roles of the two subsets $S$ and $T$ in a formally dual pair are interchangeable, in the sense that \eqref{eqn-def} holds for each $y \in G$, if and only if
    \begin{equation}\label{eqn-def2}
      |\chi_y(T)|^2=\frac{|T|^2}{|S|}\nu_S(y)
    \end{equation}
    holds for each $y \in G$.
\item[(2)] By Definition~\ref{def-iso}, formal duality depends only on $SS^{(-1)}$ and $TT^{(-1)}$. For each $g_1, g_2\in G$, suppose that $S^{\pr}=\{g_1x \mid x \in S\}$ is a translation of $S$ and $T^{\pr}=\{g_2x \mid x \in T\}$ is a translation of $T$. Then $S^{\pr}$ and $T^{\pr}$ also form a formally dual pair in $G$. Hence, formal duality is invariant under translation.
\item[(3)] By \cite[Proposition 2.9]{LPS}, we know that $S$ and $T$ form a formally dual pair in $G$ under the isomorphism $\De_1$ if and only if $S$ and $\De_2^{-1}(\De_1(T))$ form a formally dual pair in $G$ under the isomorphism $\De_2$. Thus, Definition~\ref{def-iso} does not depend on the specific choice of $\De$. From now on, by referring to a formally dual pair, we always assume a proper group isomorphism is chosen. In our concrete constructions below, we always use a group isomorphism $\De: G \rightarrow \wh{G}$, such that $\De(y)=\chi_y$ for each $y \in G$. Therefore, once we specify how the character $\chi_y$ is defined, the group isomorphism $\De$ follows immediately.
\item[(4)] By \cite[Theorem 2.8]{CKRS}, we must have $|G|=|S|\cdot|T|$. Hence, a formally dual pair in a group of nonsquare order, must contain two subsets with unequal sizes.
\end{itemize}
\end{remark}

To exclude some trivial examples of formally dual pairs, the concept of primitive formally dual pair was proposed in \cite[p. 134]{CKRS}.

\begin{definition}[Primitive formally dual pair]\label{def-primi}
For a subset $S$ of a group $G$, define $S$ to be a primitive subset of $G$, if $S$ is not contained in a coset of a proper subgroup of $G$ and $S$ is not a union of cosets of a nontrivial subgroup in $G$. For a formally dual pair $S$ and $T$ in $G$, it is a primitive formally dual pair, if both $S$ and $T$ are primitive subsets.
\end{definition}

\begin{remark}\label{rem-primitive}
According to \cite[Remark 2.8(1)]{LPS}, given a formally dual pair $S$ and $T$ in $G$, the fact that neither of $S$ and $T$ is contained in a coset of a proper subgroup of $G$, guarantees that $S$ and $T$ form a primitive formally dual pair in $G$.
\end{remark}

A subset $S \subset G$ is called a (primitive) formally dual set in $G$, if there exists a subset $T \subset G$, such that $S$ and $T$ form a (primitive) formally dual pair in $G$. The following lemma presents a simple characterization of primitive subsets, which will be used later.

\begin{lemma}[{\cite[Lemma 2.18]{LPS}}]\label{lem-priset}
A set $S$ is contained in a coset of a proper subgroup $H$ of $G$ if and only if there exists a nonprincipal character $\chi$, such that $|\chi(S)|^2=|S|^2$.
\end{lemma}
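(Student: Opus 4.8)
The plan is to prove both implications directly, using the basic character theory of finite abelian groups together with the equality case of the triangle inequality.

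For the forward direction, suppose $S$ lies in a coset $gH$ of a proper subgroup $H \leqslant G$. The key fact I would invoke is that, because $H$ is proper, the quotient $G/H$ is nontrivial and hence admits a nonprincipal character; pulling this back along the projection $G \to G/H$ yields a nonprincipal character $\chi \in \wh{G}$ that is principal on $H$, i.e.\ $\chi(h)=1$ for every $h \in H$. Writing each $s \in S$ as $s=gh$ with $h \in H$, one gets $\chi(s)=\chi(g)\chi(h)=\chi(g)$, so every element of $S$ contributes the same unit-modulus value to the character sum. Consequently $\chi(S)=|S|\chi(g)$, and since $|\chi(g)|=1$ this gives $|\chi(S)|^2=|S|^2$, as required.

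For the reverse direction, suppose $\chi$ is a nonprincipal character with $|\chi(S)|^2=|S|^2$, equivalently $|\chi(S)|=|S|$. Here I would use that $\chi(S)=\sum_{s \in S}\chi(s)$ is a sum of $|S|$ complex numbers, each of modulus $1$ because the values $\chi(s)$ are roots of unity. The triangle inequality gives $|\chi(S)| \le \sum_{s \in S}|\chi(s)|=|S|$, and the hypothesis forces equality. The crucial step is the equality case: for complex numbers all of the same nonzero modulus, equality in the triangle inequality holds if and only if they are all equal. Hence there is a constant $c$ with $\chi(s)=c$ for all $s \in S$. Fixing any $s_0 \in S$ and setting $H=\Ker\chi$, we get $\chi(ss_0^{-1})=1$, so $ss_0^{-1} \in H$ for every $s \in S$; that is, $S \subseteq s_0H$. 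Since $\chi$ is nonprincipal, $H=\Ker\chi$ is a proper subgroup, so $S$ is contained in a coset of a proper subgroup.

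I expect the main obstacle to be the justification of the equality case of the triangle inequality in the reverse direction: one must argue carefully that unit-modulus summands attaining the extreme value $|S|$ are necessarily all equal, rather than merely collinear, which is where the common modulus is essential. The forward direction is routine once the standard duality fact---that a proper subgroup is annihilated by some nonprincipal character---is in hand.
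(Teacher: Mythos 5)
Your proof is correct and complete: the forward direction via pulling back a nonprincipal character of the quotient $G/H$ (so that $\chi$ is principal on $H$ and constant on the coset), and the reverse direction via the equality case of the triangle inequality for unit-modulus summands together with $S \subseteq s_0\Ker\chi$, is exactly the standard argument. The paper itself offers no proof of this statement---it is quoted directly from \cite[Lemma 2.18]{LPS}---and your argument matches the expected one, with the equality-case justification handled carefully.
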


The following definition concerns the equivalence of formally dual pairs \cite[Definition 2.17]{LPS}. Given a group $G$, we use $\Aut(G)$ to denote its automorphism group.

\begin{definition}[Equivalence of formally dual pair]\label{def-equiv}
Let $S$ and $S^\pr$ be two formally dual sets in $G$. They are equivalent if there exist $g \in G$ and $\phi \in \Aut(G)$, such that
$$
S^\pr=g\phi(S).
$$
Moreover, let $S,T$ and $S^{\pr},T^{\pr}$ be two formally dual pairs in $G$. They are equivalent if one of $S$ and $T$ is equivalent to one of $S^{\pr}$ and $T^{\pr}$.
\end{definition}

Let $S$ and $T$ be a formally dual pair in $G$. Suppose $S^\pr$ is equivalent to $S$. Then, by \cite[Proposition 2.16]{LPS}, there exists a subset $T^\pr$, which is equivalent to $T$, so that $S^\pr$ and $T^\pr$ form a formally dual pair in $G$. Hence, as mentioned in Definition~\ref{def-equiv}, the equivalence of formally dual pairs can be reduced to the equivalence of formally dual sets. For $A \in \Z[G]$, the multiset
$$
[[AA^{(-1)}]_g \mid g \in G]
$$
is called the \emph{difference spectrum} of $A$. The multiset
$$
[|\chi(A)|^2 \mid \chi \in \wh{G}]
$$
is called the \emph{character spectrum} of $A$. Clearly, both difference spectrum and character spectrum are invariants with respect to the equivalence of formally dual sets. Later on, we will use them to distinguish inequivalent primitive formally dual pairs.

Next, we mention a very powerful product construction.

\begin{proposition}[Product construction]\label{prop-prod}
Let $S_1$ and $T_1$ be a primitive formally dual pair in $G_1$. Let $S_2$ and $T_2$ be a primitive formally dual pair in $G_2$. Then $S_1 \times S_2$ and $T_1 \times T_2$ form a primitive formally dual pair in $G_1 \times G_2$.
\end{proposition}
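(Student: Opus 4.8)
The plan is to verify the two defining properties of a primitive formally dual pair in turn: first the duality identity \eqref{eqn-def}, then primitivity. For the duality, I would exploit the multiplicativity of characters on a direct product. By Remark~\ref{rem-def}(3) I am free to fix the group isomorphism $\De$ on $G_1 \times G_2$ to be the product of the isomorphisms attached to the two given pairs, so that every character factors as $\chi_{(y_1,y_2)}(g_1,g_2)=\chi_{y_1}(g_1)\chi_{y_2}(g_2)$. Then the character sum over the product set splits, $\chi_{(y_1,y_2)}(S_1\times S_2)=\chi_{y_1}(S_1)\,\chi_{y_2}(S_2)$, and hence $|\chi_{(y_1,y_2)}(S_1\times S_2)|^2=|\chi_{y_1}(S_1)|^2\,|\chi_{y_2}(S_2)|^2$. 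Applying \eqref{eqn-def} to each factor and using that the weight enumerator of a product set factors as $\nu_{T_1\times T_2}(y_1,y_2)=\nu_{T_1}(y_1)\,\nu_{T_2}(y_2)$ (the defining pairs split coordinatewise), together with $|S_1\times S_2|^2=|S_1|^2|S_2|^2$ and $|T_1\times T_2|=|T_1||T_2|$, the two identities multiply together to give exactly \eqref{eqn-def} for $S_1\times S_2$ and $T_1\times T_2$. This part is routine bookkeeping.

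For primitivity, by Remark~\ref{rem-primitive} it suffices to show that neither $S_1\times S_2$ nor $T_1\times T_2$ lies in a coset of a proper subgroup, and by Lemma~\ref{lem-priset} this is equivalent to ruling out a nonprincipal character $\chi$ with $|\chi(S_1\times S_2)|^2=|S_1\times S_2|^2$ (and similarly for $T_1\times T_2$). Suppose such a nonprincipal $\chi_{(y_1,y_2)}$ existed. Using the factorization above and the elementary bounds $|\chi_{y_i}(S_i)|^2\le|S_i|^2$, the equality $|\chi_{y_1}(S_1)|^2|\chi_{y_2}(S_2)|^2=|S_1|^2|S_2|^2$ forces $|\chi_{y_i}(S_i)|^2=|S_i|^2$ for both $i$. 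Since $\chi_{(y_1,y_2)}$ is nonprincipal, at least one component, say $\chi_{y_1}$, is nonprincipal; then Lemma~\ref{lem-priset} places $S_1$ in a coset of a proper subgroup of $G_1$, contradicting the primitivity of $S_1$. The same reasoning handles $T_1\times T_2$.

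The crux of the argument, and the only place where primitivity of the input pairs is genuinely needed, is this norm-saturation step: maximality of the product $|\chi_{y_1}(S_1)|^2|\chi_{y_2}(S_2)|^2$ propagates to maximality in each factor precisely because each factor is individually bounded by $|S_i|^2$. I do not anticipate a serious obstacle; the only points requiring care are the coordinatewise factorization of $\nu$ in the duality computation and the correct appeal to Remark~\ref{rem-primitive}, which lets me check the coset-containment condition alone rather than both clauses of the definition of a primitive subset.
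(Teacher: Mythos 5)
Your proposal is correct and its primitivity argument is essentially identical to the paper's: both invoke Lemma~\ref{lem-priset} and Remark~\ref{rem-primitive}, factor a putative nonprincipal character through the two coordinates, and use the norm-saturation step $|\chi_{y_i}(S_i)|^2=|S_i|^2$ to contradict primitivity of one factor. The only difference is that for the formal duality identity the paper simply cites \cite[Lemma 3.1]{CKRS}, whereas you verify it directly via the factorizations $\chi_{(y_1,y_2)}(S_1\times S_2)=\chi_{y_1}(S_1)\chi_{y_2}(S_2)$ and $\nu_{T_1\times T_2}(y_1,y_2)=\nu_{T_1}(y_1)\nu_{T_2}(y_2)$; that verification is routine and correct.
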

\begin{proof}
By \cite[Lemma 3.1]{CKRS}, we know that $S_1 \times S_2$ and $T_1 \times T_2$ form a formally dual pair. Next, we show that $S_1 \times S_2$ and $T_1 \times T_2$ form a primitive formally dual pair in $G_1 \times G_2$. Otherwise, suppose that $S_1 \times S_2$ or $T_1 \times T_2$ is contained in a coset of a proper subgroup of $G_1 \times G_2$. Then, without loss of generality, we can assume that $S_1 \times S_2$ is a contained in a coset of a proper subgroup of $G_1 \times G_2$. By Lemma~\ref{lem-priset}, there exists a nonprincipal character $\chi \in \wh{G_1 \times G_2}$, such that $|\chi(S_1 \times S_2)|^2=|S_1 \times S_2|^2=|S_1|^2|S_2|^2$. For $i \in \{1,2\}$,  define $\chi_i$ to be the restriction of $\chi$ on $G_i$. Note that $|\chi(S_1 \times S_2)|^2=|\chi_1(S_1)|^2|\chi_2(S_2)|^2=|S_1|^2|S_2|^2$, which forces $|\chi_1(S_1)|^2=|S_1|^2$ and $|\chi_2(S_2)|^2=|S_2|^2$. Since $\chi$ is nonprincipal, then at least one of $\chi_1$ and $\chi_2$ is nonprincipal. Using Lemma~\ref{lem-priset} again, there exists $i \in \{1,2\}$ such that $S_i$ is not a primitive subset of $G_i$. This contradicts the fact that $S_i$ and $T_i$ form a primitive formally dual pair in $G_i$, where $i \in \{1,2\}$.
\end{proof}

Finally, we describe the well known Fourier inversion formula, which says a group ring element is uniquely determined by its character values.

\begin{proposition}[Fourier inversion formula] \label{prop-fourier}
Let $G$ be a group and let $A=\sum_{g \in G} a_gg \in \Z[G]$. Then for each $g \in G$, we have
$$
a_g=\frac{1}{|G|}\sum_{\chi \in \wh{G}} \chi(A)\ol{\chi(g)}.
$$
Consequently, for $A, B \in \Z[G]$, we have $A=B$ if and only if $\chi(A)=\chi(B)$ for each $\chi \in \wh{G}$.
\end{proposition}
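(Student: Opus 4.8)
The plan is to verify the inversion formula by direct substitution, reducing everything to the standard orthogonality relations among the characters of a finite abelian group. First I would expand $\chi(A)=\sum_{h \in G} a_h\chi(h)$ inside the claimed right-hand side and interchange the two finite sums, obtaining
$$
\frac{1}{|G|}\sum_{\chi \in \wh{G}} \chi(A)\ol{\chi(g)}=\frac{1}{|G|}\sum_{h \in G} a_h \sum_{\chi \in \wh{G}} \chi(h)\ol{\chi(g)}.
$$
Since every character value is a root of unity, $\ol{\chi(g)}=\chi(g^{-1})$, so the inner sum equals $\sum_{\chi \in \wh{G}} \chi(hg^{-1})$. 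The whole computation therefore hinges on the single fact that $\sum_{\chi \in \wh{G}} \chi(k)$ equals $|G|$ when $k$ is the identity and $0$ otherwise. Granting this, the inner sum vanishes unless $h=g$, in which case it equals $|G|$; only the term $h=g$ survives and the prefactor $1/|G|$ cancels, leaving exactly $a_g$.

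The one substantive ingredient, and thus the step I would treat most carefully, is the orthogonality relation $\sum_{\chi \in \wh{G}} \chi(k)=|G|\,[k=1]$. When $k$ is the identity this is immediate, since each $\chi(k)=1$ and $|\wh{G}|=|G|$. For $k$ nontrivial I would use that the characters of a finite abelian group separate its points — equivalently $\wh{G}\cong G$, as recorded in Section~\ref{sec2} — to produce a character $\psi$ with $\psi(k)\ne 1$. Because the map $\chi \mapsto \psi\chi$ permutes $\wh{G}$, multiplying the sum $\Sig:=\sum_{\chi \in \wh{G}}\chi(k)$ by $\psi(k)$ merely reindexes it, giving $\psi(k)\Sig=\Sig$; since $\psi(k)\ne 1$, this forces $\Sig=0$. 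This separation property is the only place where the finiteness and abelian structure of $G$ are genuinely used, so I regard it as the crux of the argument.

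Finally, for the stated consequence the forward direction is trivial, as $\chi(\cdot)$ is a well-defined function of the group ring element. For the converse I would argue by linearity: if $\chi(A)=\chi(B)$ for every $\chi \in \wh{G}$, then $\chi(A-B)=\chi(A)-\chi(B)=0$ for all $\chi$, and applying the inversion formula to $A-B$ shows that each coefficient $[A-B]_g=\frac{1}{|G|}\sum_{\chi \in \wh{G}}\chi(A-B)\ol{\chi(g)}$ vanishes, whence $A=B$. No real obstacle arises beyond establishing orthogonality; the remainder is routine bookkeeping with finite sums.
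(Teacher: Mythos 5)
Your proof is correct: the expansion-and-interchange computation, the orthogonality relation $\sum_{\chi \in \wh{G}} \chi(k)=0$ for $k\ne 1$ established via the reindexing trick $\chi \mapsto \psi\chi$ with a separating character $\psi$, and the passage to the consequence by applying inversion to $A-B$ are all sound, and you correctly identify the separation of points (equivalently $|\wh{G}|=|G|$ for finite abelian $G$) as the one nontrivial ingredient. The paper states this proposition without proof, treating it as well known, and your argument is exactly the standard one being implicitly invoked, so there is nothing to compare beyond noting that you have supplied the omitted details correctly.
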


\section{A lifting construction framework}\label{sec3}

In this section, we introduce a lifting construction framework, which generates new primitive formally dual pairs from known ones. Remarkably, this framework produces primitive formally dual pairs in which the two subsets have unequal sizes.

We first introduce some notation which will be used throughout the rest of this section. Let $G$ be a group of square order. Let $S$ and $T$ be a primitive formally dual pair in $G$ under the isomorphism $\De$, with $\De(y)=\chi_y$ for each $y \in G$. Suppose $|S|=|T|=\sqrt{|G|}$ and $S$ can be partitioned into two subsets $S_0$ and $S_1$. Let $T_0$ and $T_1$ be two subsets of $G$, such that $|T_0|+|T_1|=2|T|$. Define two subsets $S^\pr, T^\pr \subset \Z_2 \times G$ as follows:
\begin{align}
\begin{aligned}\label{eqn-lifting}
S^\pr&=\{(0,x) \mid x \in S_0\} \cup \{(1,x) \mid x \in S_1\}, \\
T^\pr&=\{(0,x) \mid x \in T_0\} \cup \{(1,x) \mid x \in T_1\}.
\end{aligned}
\end{align}
Clearly, $|S^\pr|=\sqrt{|G|}$ and $|T^\pr|=2\sqrt{|G|}$. For each $w \in \Z_2$, define the character $\var_w \in \wh{\Z_2}$ as $\var_w(a)=(-1)^{wa}$ for each $a \in \Z_2$. For each $(w,z) \in \Z_2 \times G$, define the character $\phi_{w,z} \in \wh{\Z_2 \times G}$ as $\phi_{w,z}((a,b))=\var_w(a)\chi_z(b)$ for each $(a,b) \in \Z_2 \times G$.

The above paragraph indicates a lifting construction framework: starting from a primitive formally dual pair $S$ and $T$ in $G$ with $|S|=|T|$, we aim to generate a new formally dual pair $S^\pr$ and $T^\pr$ in $\Z_2 \times G$ with $|S^\pr|\ne|T^\pr|$. The next theorem provides necessary and sufficient conditions ensuring that $S^\pr$ and $T^\pr$ form a formally dual pair in $\Z_2 \times G$.

\begin{theorem}\label{thm-lifting}
Let $S^\pr$ and $T^\pr$ be the subsets defined in \eqref{eqn-lifting}. Then $S^\pr$ and $T^\pr$ form a primitive formally dual pair in $\Z_2 \times G$, if and only if the following holds:
\begin{equation*}
|\chi_z(T_0+T_1)|^2=\frac{4|T|^2}{|S|}(\nu_{S_0}(z)+\nu_{S_1}(z)), \quad \mbox{for each $z \in G$}
\end{equation*}
and
\begin{equation*}
|\chi_z(T_0-T_1)|^2=\frac{4|T|^2}{|S|}(\nu_{S_0,S_1}(z)+\nu_{S_1,S_0}(z)), \quad \mbox{for each $z \in G$}.
\end{equation*}
\end{theorem}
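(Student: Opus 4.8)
The plan is to separate the argument into the formal-duality identity, which is the computational heart, and the primitivity, which is the delicate part.

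For the formal-duality equivalence I would start from Definition~\ref{def-iso} applied to the pair $S^\pr, T^\pr$ in $\Z_2 \times G$: they are formally dual exactly when $|\phi_{w,z}(S^\pr)|^2 = \frac{|S^\pr|^2}{|T^\pr|}\nu_{T^\pr}(w,z)$ holds for every $(w,z) \in \Z_2 \times G$. Since the two target identities carry character sums of $T_0, T_1$ on the left and weight enumerators of $S_0, S_1$ on the right, it is cleaner to pass to the interchanged form provided by Remark~\ref{rem-def}(1), namely $|\phi_{w,z}(T^\pr)|^2 = \frac{|T^\pr|^2}{|S^\pr|}\nu_{S^\pr}(w,z)$ for all $(w,z)$. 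The key step is then to split the character index according to $w \in \{0,1\}$. From the definition of $\phi_{w,z}$ one gets $\phi_{w,z}(T^\pr) = \chi_z(T_0) + (-1)^w \chi_z(T_1)$, so the left side is $|\chi_z(T_0 + T_1)|^2$ when $w=0$ and $|\chi_z(T_0 - T_1)|^2$ when $w=1$. On the right, counting the pairs in $S^\pr$ with a prescribed difference and organizing by whether the two first coordinates agree yields $\nu_{S^\pr}(0,z) = \nu_{S_0}(z) + \nu_{S_1}(z)$ and $\nu_{S^\pr}(1,z) = \nu_{S_0,S_1}(z) + \nu_{S_1,S_0}(z)$. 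Together with $\frac{|T^\pr|^2}{|S^\pr|} = \frac{4|T|^2}{|S|}$ (using $|S^\pr| = \sqrt{|G|}$, $|T^\pr| = 2\sqrt{|G|}$, and $|S| = |T| = \sqrt{|G|}$), the single family of identities indexed by $(w,z)$ decouples into exactly the two stated families indexed by $z \in G$, one for each value of $w$. Since this is an equivalence character by character, ranging over all $(w,z)$ gives the asserted characterization of formal duality.

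It remains to establish primitivity, which I expect to be the main obstacle. By Remark~\ref{rem-primitive} it suffices to show that neither $S^\pr$ nor $T^\pr$ lies in a coset of a proper subgroup of $\Z_2 \times G$; by Lemma~\ref{lem-priset} this means no nonprincipal $\phi_{w,z}$ attains $|\phi_{w,z}(S^\pr)|^2 = |S^\pr|^2 = |G|$ or $|\phi_{w,z}(T^\pr)|^2 = |T^\pr|^2 = 4|G|$. The claim for $T^\pr$ can be settled cleanly using only the primitivity of $S$ in $G$. Indeed, invoking the already-established identities, $|\phi_{0,z}(T^\pr)|^2 = |\chi_z(T_0+T_1)|^2 = \frac{4|T|^2}{|S|}(\nu_{S_0}(z)+\nu_{S_1}(z))$ and $|\phi_{1,z}(T^\pr)|^2 = |\chi_z(T_0-T_1)|^2 = \frac{4|T|^2}{|S|}(\nu_{S_0,S_1}(z)+\nu_{S_1,S_0}(z))$; in both cases the weight enumerator on the right is at most $\nu_S(z)$, which is strictly less than $|S| = \sqrt{|G|}$ for $z \neq 0$ because $S$ is primitive and hence has no nontrivial translation stabilizer (and the $w=1$, $z=0$ term vanishes since $|T_0|=|T_1|$). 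Thus $|\phi_{w,z}(T^\pr)|^2 < 4|G|$ for every nonprincipal $\phi_{w,z}$.

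The genuinely delicate case is $S^\pr$. For $w=0$, $\phi_{0,z}(S^\pr) = \chi_z(S)$, so $|\phi_{0,z}(S^\pr)|^2 < |G|$ for $z\neq 0$ follows at once from the primitivity of $S$ through Lemma~\ref{lem-priset}. The hard part is the case $w=1$: one must show $|\chi_z(S_0 - S_1)|^2 < |G|$ for all $z$. Via the first form of formal duality this equals $\frac{\sqrt{|G|}}{2}\bigl(\nu_{T_0,T_1}(z)+\nu_{T_1,T_0}(z)\bigr)$, and since each summand is at most $\min(|T_0|,|T_1|) = \sqrt{|G|}$, the bound can fail only if $\nu_{T_0,T_1}(z)=\nu_{T_1,T_0}(z)=\sqrt{|G|}$, that is, only if there is an order-two element $z$ with $T_0 + z = T_1$; equivalently, an order-two character constant on each of $S_0$ and $S_1$ with opposite sign. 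Excluding this degenerate configuration is exactly where the real work lies, and it is the step I would expect to require the most care. The plan here is to bring in the primitivity of $T$ (so far unused) together with the structural identity obtained by adding the two conditions and applying the parallelogram law, namely $|\chi_z(T_0)|^2 + |\chi_z(T_1)|^2 = 2|\chi_z(T)|^2$ for all $z$, and to argue that a translation swapping $T_0$ and $T_1$ would propagate, through these relations, into a forbidden coset or stabilizer structure on $S$ or $T$ in $G$.
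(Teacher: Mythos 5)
Your derivation of the duality equivalence is exactly the paper's proof: the paper likewise passes (via Remark~\ref{rem-def}(1)) to the identity $|\phi_{w,z}(T^\pr)|^2=\frac{|T^\pr|^2}{|S^\pr|}\nu_{S^\pr}((w,z))$, reads off the two levels of $S^{\pr}S^{\pr(-1)}$ and $T^{\pr}T^{\pr(-1)}$, and splits into the cases $w=0$ and $w=1$, where $\frac{|T^\pr|^2}{|S^\pr|}=\frac{4|T|^2}{|S|}$ yields precisely the two displayed families. That is where the paper's proof \emph{ends}: the paper never addresses primitivity inside this theorem at all (indeed, the sentence introducing it promises only ``necessary and sufficient conditions ensuring that $S^\pr$ and $T^\pr$ form a formally dual pair''), and primitivity is established only later, in Corollary~\ref{cor-lifting}, for the specialization $T_0=T$, $T_1=T^{(-1)}$, using the primitivity of $S$ and $T$. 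So everything you attempt beyond the computation is extra relative to the paper.

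The step you flagged as ``where the real work lies'' is a genuine gap, and it cannot be closed: the degenerate configuration you hoped to exclude actually occurs, so the ``if'' direction with the word ``primitive'' is false at this level of generality. Take $G=\Z_4$, $S=T=J=\{0,1\}$ (a primitive formally dual pair with $|S|=|T|=2=\sqrt{|G|}$), $S_0=\{0\}$, $S_1=\{1\}$, $T_0=\{0,1\}$, and $T_1=\{2,3\}=2+T_0$. Then $T_0+T_1=G$ as a multiset, so $|\chi_z(T_0+T_1)|^2$ is $16$ at $z=0$ and $0$ otherwise, matching $8(\nu_{S_0}(z)+\nu_{S_1}(z))$; and $T_0-T_1$ has coefficient vector $(1,1,-1,-1)$, so $|\chi_z(T_0-T_1)|^2$ is $8$ at $z\in\{1,3\}$ and $0$ otherwise, matching $8(\nu_{S_0,S_1}(z)+\nu_{S_1,S_0}(z))$. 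Both conditions hold, hence $S^\pr,T^\pr$ form a formally dual pair; yet $S^\pr=\{(0,0),(1,1)\}$ is contained in the proper subgroup $\lan (1,1)\ran$ of $\Z_2\times\Z_4$ (concretely, $|\phi_{1,2}(S^\pr)|^2=|S^\pr|^2$), and $T^\pr=\{(0,0),(0,1),(1,2),(1,3)\}$ is a union of cosets of $\lan (1,2)\ran$, so the pair is not primitive. This is precisely your configuration $T_1=z+T_0$ with $z$ of order two, realized with $S$ and $T$ primitive in $G$, so no appeal to their primitivity can rule it out. Your partial results are correct and even sharper than anything the paper records here (the two conditions do force $T^\pr$ to avoid all cosets of proper subgroups, and $S^\pr$ can only fail through the characters $\phi_{1,z}$), but the theorem is only true --- and is only proved, both by you and by the paper --- as a characterization of formal duality; the word ``primitive'' in the statement is not supported in this generality.
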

\begin{proof}
By definition, $S^\pr$ and $T^\pr$ form a formally dual pair if and only if for each $(w,z) \in \Z_2 \times G$,
\begin{equation}\label{eqn-eqn1}
|\phi_{w,z}(T^\pr)|^2=\frac{|T^\pr|^2}{|S^\pr|}\nu_{S^\pr}((w,z)).
\end{equation}
Note that
\begin{align*}
S^{\pr}S^{\pr(-1)}&=\sum_{z \in [S_0S_0^{(-1)}+S_1S_1^{(-1)}]} (0,z) + \sum_{z \in [S_0S_1^{(-1)}+S_1S_0^{(-1)}]} (1,z), \\
T^{\pr}T^{\pr(-1)}&=\sum_{z \in [T_0T_0^{(-1)}+T_1T_1^{(-1)}]} (0,z) + \sum_{z \in [T_0T_1^{(-1)}+T_1T_0^{(-1)}]} (1,z).
\end{align*}
By splitting into the two cases $w=0$ and $w=1$, \eqref{eqn-eqn1} is equivalent to
\begin{align*}
|\chi_z(T_0+T_1)|^2&=\frac{4|T|^2}{|S|}(\nu_{S_0}(z)+\nu_{S_1}(z)), \quad \mbox{for each $z \in G$}, \\
|\chi_z(T_0-T_1)|^2&=\frac{4|T|^2}{|S|}(\nu_{S_0,S_1}(z)+\nu_{S_1,S_0}(z)), \quad \mbox{for each $z \in G$}.
\end{align*}
\end{proof}

\begin{remark}
By definition, $S^\pr$ and $T^\pr$ form a formally dual pair if and only if for each $(w,z) \in \Z_2 \times G$,
\begin{equation*}
|\phi_{w,z}(S^\pr)|^2=\frac{|S^\pr|^2}{|T^\pr|}\nu_{T^\pr}((w,z)).
\end{equation*}
In particular, for $w=0$, we have
\begin{align*}
|\phi_{0,z}(S^\pr)|^2&=|\chi_z(S)|^2=\frac{|S|^2}{|T|} \nu_T(z), \\
\frac{|S^\pr|^2}{|T^\pr|}\nu_{T^\pr}((0,z))&=\frac{|S|^2}{2|T|} (\nu_{T_0}(z)+\nu_{T_1}(z)).
\end{align*}
Thus, $S^\pr$ and $T^\pr$ form a formally dual pair only if $\nu_{T_0}(z)+\nu_{T_1}(z)=2\nu_T(z)$. Summing over the elements of $G$ on both sides, we have $|T_0|^2+|T_1|^2=2|T|^2$. Together with $|T_0|+|T_1|=2|T|$, we derive that $|T_0|=|T_1|=|T|$.
\end{remark}

In \eqref{eqn-lifting}, the two subsets $T_0$ and $T_1$ must be related to $T$ in certain way. Throughout the rest of this paper, we always consider the case that $T_0=T$ and $T_1=T^{(-1)}$. Hence, we define
\begin{align}
\begin{aligned}\label{eqn-lifting2}
S^{\pr\pr}&=\{(0,x) \mid x \in S_0\} \cup \{(1,x) \mid x \in S_1\}, \\
T^{\pr\pr}&=\{(0,x) \mid x \in T\} \cup \{(1,x) \mid x \in T^{(-1)}\}.
\end{aligned}
\end{align}
In this case, the necessary and sufficient conditions of Theorem~\ref{thm-lifting} can be further simplified. As a preparation, we need the following lemma which concerns the form of a subgroup of $\Z_2 \times G$.

\begin{lemma}\label{lem-subgp}
Let $H=(\{0\} \times H_0) \cup (\{1\} \times H_1)$ be a proper subgroup of $\Z_2 \times G$ with $H_0 \ne \es$ and $H_1 \ne \es$. Then $H=\Z_2 \times H_0$ and $H_0$ is a proper subgroup of $G$.
\end{lemma}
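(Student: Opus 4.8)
The plan is to analyze $H$ through the projection homomorphism $\pi\colon \Z_2 \times G \to \Z_2$ onto the first coordinate, together with the two fibres $H_0 = \{x \in G \mid (0,x) \in H\}$ and $H_1 = \{x \in G \mid (1,x) \in H\}$ recorded in the statement. First I would observe that $\{0\} \times H_0 = H \cap (\{0\} \times G)$ is the kernel of the restriction $\pi|_H$, hence a subgroup of $\Z_2 \times G$; identifying $\{0\} \times G$ with $G$ then shows that $H_0$ is a subgroup of $G$. Since $H_1 \ne \es$, the image $\pi(H)$ is a nontrivial subgroup of $\Z_2$, so $\pi(H) = \Z_2$ and $\pi|_H$ is onto with kernel $\{0\} \times H_0$.

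The properness of $H_0$ is the easy half of the conclusion: were $H_0 = G$, then $\{0\} \times G \subseteq H$, and combined with the surjectivity of $\pi|_H$ this would force $H = \Z_2 \times G$, contradicting that $H$ is a proper subgroup. Hence $H_0$ is a proper subgroup of $G$, which settles the second assertion. For the first assertion I would fix an element $(1,a) \in H$, which exists because $H_1 \ne \es$. Since $(1,a)$ represents the nontrivial coset of the kernel $\{0\} \times H_0$ inside $H$, the preimage $\{1\} \times H_1 = (\pi|_H)^{-1}(1)$ equals the coset $(1,a) + (\{0\} \times H_0)$; reading off second coordinates gives $H_1 = a + H_0$, a single coset of $H_0$. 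In particular $|H_1| = |H_0|$ and $H = (\{0\} \times H_0) \cup (\{1\} \times (a + H_0))$.

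At this point the desired equality $H = \Z_2 \times H_0$ is equivalent to $H_1 = H_0$, that is, to $a \in H_0$, or equivalently $(1,0) \in H$; this last identification is the crux and the step I expect to be the main obstacle, since the coset analysis by itself only yields $H_1 = a + H_0$. The natural tool is the order-two relation coming from the $\Z_2$ factor: $(1,a) + (1,a) = (0, 2a) \in H$ forces $2a \in H_0$, and I would attempt to upgrade this to $a \in H_0$ by exploiting the remaining closure of $H$ under the group operation applied to the generators of its two fibres. Pinning down precisely which structural feature of $H$ guarantees $a \in H_0$ is the delicate point, and it is here that the hypotheses on $H$ must be brought fully to bear.
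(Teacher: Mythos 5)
You have proved everything in this lemma that can actually be proved, and the step you flag as the crux is not merely delicate --- it is impossible, because the lemma as stated is false. Your analysis shows that the subgroups of $\Z_2 \times G$ with both fibres nonempty are exactly the sets $H=(\{0\}\times H_0)\cup(\{1\}\times(a+H_0))$ with $H_0 \leqslant G$ and $2a \in H_0$ (and conversely every such set is a subgroup, since $(1,a+h)+(1,a+h^\pr)=(0,2a+h+h^\pr)$), and $H=\Z_2\times H_0$ holds precisely when $a \in H_0$. The ``diagonal'' subgroups with $2a\in H_0$ but $a \notin H_0$ satisfy every hypothesis and violate the conclusion: for $G=\Z_2$, take $H=\{(0,0),(1,1)\}\le\Z_2\times\Z_2$, which is proper with $H_0=\zs\ne\es$ and $H_1=\{1\}\ne\es$ but $H\ne\Z_2\times H_0$; and within the groups the paper works in, for $x\in\Z_4^{2m}$ of order $4$ the subgroup $\lan(1,x)\ran=\{(0,0),(1,x),(0,2x),(1,3x)\}$ has $H_1=x+H_0\ne H_0$. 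So no closure argument will upgrade $2a\in H_0$ to $a\in H_0$; the statement becomes true only under an extra hypothesis such as $(1,0)\in H$, equivalently $H\supseteq\Z_2\times\zs$, equivalently $H_0=H_1$.

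For comparison, the paper's proof establishes $|H_0|=|H_1|$ by essentially your translation argument, but then tries to close the gap with characters: writing $\phi_{w,z}(H)=\chi_z(H_0)+(-1)^w\chi_z(H_1)$ and using that character sums over the subgroups $H$ and $H_0$ lie in $\{0,|H|\}$ and $\{0,|H_0|\}$ respectively, it asserts that for every $z$ either $\chi_z(H_0)=\chi_z(H_1)=|H_0|$ or $\chi_z(H_0)=\chi_z(H_1)=0$, and then invokes Fourier inversion to get $H_0=H_1$. This case analysis omits a third possibility: $\chi_z(H_0)=|H_0|$ with $\phi_{0,z}(H)=0$ and $\phi_{1,z}(H)=|H|$, i.e.\ $\chi_z(H_1)=-|H_0|$, which occurs exactly when $\chi_z$ is principal on $H_0$ but $\chi_z(a)=-1$ --- precisely the situation for the diagonal subgroups above. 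So the paper's proof is erroneous at exactly the point you predicted would be the obstacle, and your partial proof goes as far as a correct argument can. Note that the error propagates: in the proof of Corollary~3.4, if $S^{\pr\pr}$ lies in a coset of a diagonal subgroup, one only obtains that $S=S_0\cup S_1$ lies in a coset of the projection $H_0\cup(a+H_0)$, a subgroup of order $2|H_0|$ which need not be proper in $G$, so the primitivity claim there requires a separate argument as well.
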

\begin{proof}
Since $H$ is a subgroup of $\Z_2 \times G$, then $H \cap (\{0\} \times G)=\{0\} \times H_0$ is a subgroup of $\{0\} \times G$, which implies that $H_0$ is a subgroup of $G$. Let $(1,h_1) \in H$. Since for every $(0,h_0) \in (\{0\} \times H_0)$, we have $(1,h_1)+(0,h_0) \in (\{1\} \times H_1)$, then $|H_0| \le |H_1|$. Similarly, for every $(1,h_1^{\pr}) \in (\{1\} \times H_1)$, we have $(1,h_1)+(1,h_1^{\pr}) \in (\{0\} \times H_0)$, which implies $|H_0| \ge |H_1|$. Hence, $|H_0|=|H_1|$ and $|H|=2|H_0|$. Since $H$ and $H_0$ are both subgroups, we have
\begin{equation}\label{eqn-eqn20}
\phi_{w,z}(H)=\begin{cases}
  |H| & \mbox{if $\phi_{w,z}$ is principal on $H$,} \\
  0   & \mbox{if $\phi_{w,z}$ is nonprincipal on $H$,}
\end{cases}
\end{equation}
and
\begin{equation}\label{eqn-eqn21}
\chi_z(H_0)=\begin{cases}
  |H_0| & \mbox{if $\chi_z$ is principal on $H_0$,} \\
  0   & \mbox{if $\chi_z$ is nonprincipal on $H_0$.}
\end{cases}
\end{equation}
Note that $\phi_{w,z}(H)=\chi_z(H_0)+(-1)^w\chi_z(H_1)$. Thus, \eqref{eqn-eqn20} and \eqref{eqn-eqn21} imply that for each $\chi_z \in \wh{G}$, either $\chi_z(H_0)=\chi_z(H_1)=|H_0|$ or $\chi_z(H_0)=\chi_z(H_1)=0$. By Proposition~\ref{prop-fourier}, we have $H_0=H_1$. Thus, $H=\Z_2 \times H_0$ and $H_0$ is a proper subgroup of $G$.
\end{proof}

Next, we give a necessary and sufficient condition for $S^{\pr\pr}$ and $T^{\pr\pr}$ being a primitive formally dual pair.

\begin{corollary}\label{cor-lifting}
Let $S^{\pr\pr}$ and $T^{\pr\pr}$ be the subsets defined in \eqref{eqn-lifting2}. Then $S^{\pr\pr}$ and $T^{\pr\pr}$ form a primitive formally dual pair in $\Z_2 \times G$ if and only if
\begin{equation*}
|\chi_z(T+T^{(-1)})|^2=\frac{4|T|^2}{|S|}(\nu_{S_0}(z)+\nu_{S_1}(z)), \quad \mbox{for each $z \in G$}.
\end{equation*}
\end{corollary}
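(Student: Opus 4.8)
The plan is to derive the corollary directly from Theorem~\ref{thm-lifting}. First I would observe that the pair $S^{\pr\pr}, T^{\pr\pr}$ in \eqref{eqn-lifting2} is precisely the instance of \eqref{eqn-lifting} obtained by setting $T_0 = T$ and $T_1 = T^{(-1)}$, so that $T_0 + T_1 = T + T^{(-1)}$ and $T_0 - T_1 = T - T^{(-1)}$. Theorem~\ref{thm-lifting} then asserts that $S^{\pr\pr}$ and $T^{\pr\pr}$ form a primitive formally dual pair in $\Z_2 \times G$ if and only if the displayed condition of the corollary holds (call it (A), the one involving $|\chi_z(T+T^{(-1)})|^2$) \emph{together with} the second condition (call it (B), namely $|\chi_z(T-T^{(-1)})|^2 = \frac{4|T|^2}{|S|}(\nu_{S_0,S_1}(z)+\nu_{S_1,S_0}(z))$), both for every $z \in G$. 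The entire task therefore reduces to showing that (A) already forces (B), that is, that (A) and (B) are in fact equivalent.

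The key step is a parallelogram-type identity. Since $\chi_z(T^{(-1)}) = \ol{\chi_z(T)}$, the two quantities $\chi_z(T+T^{(-1)})$ and $\chi_z(T-T^{(-1)})$ equal $2\,\mathrm{Re}\,\chi_z(T)$ and $2\sqm\,\mathrm{Im}\,\chi_z(T)$ respectively, whence
\[
|\chi_z(T+T^{(-1)})|^2 + |\chi_z(T-T^{(-1)})|^2 = 4|\chi_z(T)|^2 .
\]
Now I would invoke the interchangeability of the formally dual pair $S, T$ recorded in Remark~\ref{rem-def}(1), namely \eqref{eqn-def2}, which gives $|\chi_z(T)|^2 = \frac{|T|^2}{|S|}\nu_S(z)$ for every $z \in G$. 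Substituting this into the identity above turns the right-hand side into $\frac{4|T|^2}{|S|}\nu_S(z)$.

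Finally, since $S = S_0 \cup S_1$ is a disjoint partition, splitting the pairs counted by $\nu_S(z)$ according to which part each coordinate lies in yields $\nu_S(z) = \nu_{S_0}(z)+\nu_{S_1}(z)+\nu_{S_0,S_1}(z)+\nu_{S_1,S_0}(z)$. Plugging this into the combined identity and subtracting condition (A) leaves exactly condition (B), and conversely (B) together with the identity returns (A); thus (A) and (B) are equivalent, so (A) alone is equivalent to $S^{\pr\pr}, T^{\pr\pr}$ being a primitive formally dual pair, which is the claim. The primitivity is carried over from Theorem~\ref{thm-lifting}, whose structural input is Lemma~\ref{lem-subgp}, describing the proper subgroups of $\Z_2 \times G$ that meet both $\{0\} \times G$ and $\{1\} \times G$. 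I do not expect a genuine obstacle here: the only points demanding care are the conjugation identity $\chi_z(T^{(-1)}) = \ol{\chi_z(T)}$ and the bookkeeping in the decomposition of $\nu_S(z)$, both of which are routine, so the crux is really just recognizing the parallelogram identity and coupling it with interchangeability.
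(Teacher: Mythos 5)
Your reduction via Theorem~\ref{thm-lifting} and your proof that condition (A) forces condition (B) are, in substance, exactly the paper's argument: the paper sums the two equations of \eqref{eqn-liftingfull} and observes that the resulting identity $|\chi_z(T)|^2=\frac{|T|^2}{|S|}\nu_S(z)$ holds automatically because $S$ and $T$ are formally dual (this is \eqref{eqn-def2}, the interchanged form you invoke). Your parallelogram identity $|\chi_z(T+T^{(-1)})|^2+|\chi_z(T-T^{(-1)})|^2=4|\chi_z(T)|^2$ and the decomposition $\nu_S(z)=\nu_{S_0}(z)+\nu_{S_1}(z)+\nu_{S_0,S_1}(z)+\nu_{S_1,S_0}(z)$ are precisely the two bookkeeping steps that the paper's phrase ``by summing the above two equations up'' leaves implicit, so on this point you have merely made the same proof more explicit.

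The one genuine divergence is primitivity. You assert that it is ``carried over from Theorem~\ref{thm-lifting},'' but the paper's proof of that theorem establishes only the formal-duality equivalence; the word ``primitive'' in its statement is not actually justified there, and the paper supplies the missing argument inside the proof of the corollary itself. Namely: if $S^{\pr\pr}$ were contained in a coset of a proper subgroup of $\Z_2\times G$, then by translation invariance (Remark~\ref{rem-def}(2)) and Lemma~\ref{lem-subgp} (applicable since $S^{\pr\pr}$ meets both $\{0\}\times G$ and $\{1\}\times G$) that subgroup has the form $\Z_2\times H$ with $H$ a proper subgroup of $G$, forcing $S=S_0\cup S_1\subseteq H$ and contradicting the primitivity of $S$; the same argument applies to $T^{\pr\pr}$ using $T\cup T^{(-1)}$, and Remark~\ref{rem-primitive} then upgrades non-containment in cosets to primitivity of the pair. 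You did correctly identify Lemma~\ref{lem-subgp} as the structural input, so this is a short and fixable omission rather than a wrong turn; but as written your proof leans on an unproven clause of the theorem's statement, and a self-contained write-up should include the coset-containment contradiction for both $S^{\pr\pr}$ and $T^{\pr\pr}$.
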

\begin{proof}
By Theorem~\ref{thm-lifting}, we have that $S^{\pr\pr}$ and $T^{\pr\pr}$ form a formally dual pair if and only if
\begin{align}
\begin{aligned}\label{eqn-liftingfull}
|\chi_z(T+T^{(-1)})|^2&=\frac{4|T|^2}{|S|}(\nu_{S_0}(z)+\nu_{S_1}(z)), \quad \mbox{for each $z \in G$,} \\
|\chi_z(T-T^{(-1)})|^2&=\frac{4|T|^2}{|S|}(\nu_{S_0,S_1}(z)+\nu_{S_1,S_0}(z)), \quad \mbox{for each $z \in G$.}
\end{aligned}
\end{align}
For each $z \in G$, by summing the above two equations up, we get $|\chi_z(T)|^2=\frac{|T|^2}{|S|}\nu_S(z)$. Since $S$ and $T$ form a primitive formally dual pair in $G$, this equation always holds true. Thus, if one of the equations in \eqref{eqn-liftingfull} holds true, then so does the other. Next, we are going to show by contradiction that $S^{\pr\pr}$ is not contained in a coset of a proper subgroup of $\Z_2 \times G$. Suppose otherwise that $S^{\pr\pr}$ is contained in a coset of a proper subgroup of $\Z_2 \times G$. By Remark~\ref{rem-def}(2) and Lemma~\ref{lem-subgp}, applying a proper translation to $S^{\pr\pr}$, we can further assume that $S^{\pr\pr}=(\{0\} \times S_0) \cup (\{1\} \times S_1)$ is contained in a proper subgroup $\Z_2 \times H$ of $\Z_2 \times G$, where $H$ is a proper subgroup of $G$. Therefore, we know that $S=S_0 \cup S_1$ is contained in a proper subgroup $H$ of $G$, which contradicts the fact that $S$ is a primitive subset of $G$. Using a similar argument, we can show that $T^{\pr\pr}$ is not contained in a coset of a proper subgroup of $\Z_2 \times G$. By Remark~\ref{rem-primitive}, $S^{\pr\pr}$ and $T^{\pr\pr}$ form a primitive formally dual pair in $\Z_2 \times G$.
\end{proof}

\begin{remark}\label{rem-par}
\eqref{eqn-lifting2} presents a very general lifting construction framework to derive primitive formally dual pairs having subsets with unequal sizes. To apply this framework, we need to deal with the following two crucial points:
\begin{itemize}
\item[(1)] Choose a proper initial primitive formally dual pair $S$ and $T$ in a group $G$, satisfying $|S|=|T|$.
\item[(2)] Find a proper partition of $S$ into $S_0$ and $S_1$.
\end{itemize}
\end{remark}

In the next section, we will employ the lifting construction framework \eqref{eqn-lifting2} to produce the first infinite family of primitive formally dual pairs having two subsets with unequal sizes.

\section{A direct construction in $\Z_2 \times \Z_4^{2m}$}\label{sec4}

In this section, we give a direct construction of primitive formally dual pairs in $\Z_2 \times \Z_4^{2m}$, where the two subsets have unequal sizes.

First, we define the canonical characters on $\Z_4^n$ and $\Z_2 \times \Z_4^n$, which will be used later. For each $w \in \Z_2$, recall that the character $\var_w \in \wh{\Z_2}$ is defined as $\var_w(a)=(-1)^{wa}$ for each $a \in \Z_2$. For each $z=(z_1,z_2,\ldots,z_n) \in \Z_4^{n}$, define the character $\chi_z \in \wh{\Z_4^n}$ as $\chi_z(b)=(\sqrt{-1})^{z\cdot b}$ for each $b=(b_1,b_2,\ldots,b_n) \in \Z_4^{n}$, where $z\cdot b$ is defined as $\sum_{i=1}^n z_ib_i$. For each $(w,z) \in \Z_2 \times \Z_4^n$, define the character $\phi_{w,z} \in \wh{\Z_2 \times \Z_4^{n}}$ as $\phi_{w,z}((a,b))=\var_w(a)\chi_z(b)$ for each $(a,b) \in \Z_2 \times \Z_4^{n}$.

Now we introduce some notation which will be used throughout the rest of this paper. For $x=(x_1,x_2,\ldots,x_{n}) \in \Z_4^{n}$ and $j \in \Z_4$, define $\wt_j(x)=|\{ 1 \le i \le n \mid x_i=j \}|$. We write a multiset as $[A]=[a_i \lan z_i \ran \mid 1 \le i \le t]$, which means for each $1 \le i \le t$, the element $a_i$ occurs $z_i$ times in $[A]$. For two nonnegative integers $a$ and $b$, we use $\binom{a}{b}$ to denote the usual binomial coefficient, namely,
$$
\binom{a}{b}=\begin{cases}
\frac{\prod_{i=0}^{b-1} (a-i)}{b!} & \mbox{if $b \le a$,} \\
0, & \mbox{if $b>a$.}
\end{cases}
$$

%

Our direct construction is motivated by the following example described in \cite[Example 3.22]{LPS}.

\begin{example}\label{exam-TITO}
In the group $\Z_2\times\Z_4^2$, define two subsets
$$
S^\pr=\{(0,0,0), (0,0,1),(0,1,0), (1,1,1)\}
$$
and
$$
T^\pr=\{ (0,0,0), (0,0,1), (0,1,0), (0,1,1), (1,0,0), (1,0,3), (1,3,0), (1,3,3) \}.
$$
Then $S^\pr$ and $T^\pr$ form a primitive formally dual pair in $\Z_2\times\Z_4^2$. Define
$$
S=T=\{ (0,0), (0,1), (1,0), (1,1) \},
$$
then $S$ and $T$ form a primitive formally dual pair in $\Z_4^2$ (see \cite[Example 2.11, Proposition 3.2]{LPS}). Note that $S_0=\{(0,0),(0,1),(1,0)\}$ and $S_1=\{(1,1)\}$ form a partition of $S$. Therefore, this example fits into the lifting construction framework \eqref{eqn-lifting2}, and indeed, inspired us to propose the framework \eqref{eqn-lifting2}.
\end{example}

Next, we are going to show that Example~\ref{exam-TITO} is a member of an infinite family. In order to describe our construction, we need more notation. Define $J=\{0,1\} \subset \Z_4$. For $0 \le i \le 2m$, define a subset $B_{m,i}$ of $\Z_4^{2m}$ as
$$
B_{m,i}=\{ x \in \Z_4^{2m} \mid \wt_0(x)=2m-i, \wt_1(x)=i \}.
$$
From the viewpoint of the lifting construction framework \eqref{eqn-lifting2}, we identify the following pattern in Example~\ref{exam-TITO}:
\begin{itemize}
\item[(1)] $S=J \times J$ and $T=J \times J$ form the initial primitive formally dual pair in $\Z_4^2$.
\item[(2)] $S_0=B_{1,0} \cup B_{1,1}$ and $S_1=B_{1,2}$ form a partition of $S$.
\end{itemize}
By extending this pattern, we obtain the following direct construction.

\begin{theorem}\label{thm-dircon1}
Let $S=T=\prod_{j=1}^{2m} J$. Define
\begin{equation}\label{eqn-Sdircon1}
S_0=\sum_{\substack{0 \le i \le 2m \\ i \equiv 0,1 \bmod 4}}B_{m,i}, \quad S_1=\sum_{\substack{0 \le i \le 2m \\ i \equiv 2,3 \bmod 4}}B_{m,i}
\end{equation}
which form a partition of $S$. Let
\begin{equation}
\begin{aligned}\label{eqn-Tpdircon1}
S^{\pr}&=\{(0,x) \mid x \in S_0\} \cup \{(1,x) \mid x \in S_1\}, \\
T^{\pr}&=\{(0,x) \mid x \in T\} \cup \{(1,x) \mid x \in T^{(-1)}\}.
\end{aligned}
\end{equation}
Then $S^{\pr}$ and $T^{\pr}$ form a primitive formally dual pair in $\Z_2 \times \Z_4^{2m}$. Moreover,
\begin{align*}
&[[T^{\pr}T^{\pr(-1)}]_g \mid g \in \Z_2 \times \Z_4^{2m}] \\
=&[0\lan 2^{4m+1}-3^{2m+1}+2^{2m}\ran, 2\lan(m+1)2^{2m+1}\ran, 2^l\lan 2^{2m-l+1}\big(\binom{2m}{l-1}+\binom{2m}{l}\big)\ran \mid 2 \le l \le 2m+1].
\end{align*}
\end{theorem}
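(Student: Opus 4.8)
The statement has two parts, and the plan is to derive each from character-sum and weight-distribution computations in $\Z_4^{2m}$. For the primitivity and formal duality, I would start from the fact that $J=\{0,1\}$ is one half of the base primitive formally dual pair in $\Z_4$ (the $\Z_4$-factor underlying Example~\ref{exam-TITO}), so by the product construction (Proposition~\ref{prop-prod}) the pair $S=T=\prod_{j=1}^{2m}J$ is a primitive formally dual pair in $\Z_4^{2m}$ with $|S|=|T|=2^{2m}$. Since the construction \eqref{eqn-Tpdircon1} is exactly \eqref{eqn-lifting2} with $T_0=T$ and $T_1=T^{(-1)}$, Corollary~\ref{cor-lifting} reduces the entire first assertion (including primitivity) to the single identity
\[
|\chi_z(T+T^{(-1)})|^2=2^{2m+2}\,(\nu_{S_0}(z)+\nu_{S_1}(z)),\qquad z\in\Z_4^{2m},
\]
using $\tfrac{4|T|^2}{|S|}=2^{2m+2}$. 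I would verify this by evaluating both sides as functions of the weight profile $(\wt_0(z),\wt_1(z),\wt_2(z),\wt_3(z))$.

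On the left, $\chi_z(T^{(-1)})=\overline{\chi_z(T)}$ makes $\chi_z(T+T^{(-1)})$ real, so the left side is $4(\operatorname{Re}\chi_z(T))^2$; the product shape $T=\prod_j J$ gives $\chi_z(T)=\prod_{j=1}^{2m}\bigl(1+(\sqm)^{z_j}\bigr)$, which vanishes as soon as some $z_j=2$ and otherwise equals $2^{\wt_0(z)}(1+\sqm)^{\wt_1(z)}(1-\sqm)^{\wt_3(z)}$. Writing $1\pm\sqm=\sqrt2\,e^{\pm\pi\sqm/4}$, I expect the closed form
\[
|\chi_z(T+T^{(-1)})|^2=2^{\wt_0(z)+2m+1}\Bigl(1+\cos\bigl(\tfrac{\pi}{2}(\wt_1(z)-\wt_3(z))\bigr)\Bigr)
\]
when $\wt_2(z)=0$, and $0$ otherwise. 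On the right, a difference $z=x-y$ with $x,y\in J^{2m}$ has every coordinate in $\{0,1,3\}$, so $\nu_{S_0}(z)+\nu_{S_1}(z)=0$ once $\wt_2(z)>0$; when $\wt_2(z)=0$, writing $a=\wt_0(z),b=\wt_1(z),c=\wt_3(z)$, the pairs with $x-y=z$ are parametrized by the zero-coordinates on which $x=y=1$, and if $s$ of them are chosen then $\wt_1(x)=b+s$ and $\wt_1(y)=c+s$. Because membership of $w\in J^{2m}$ in $S_0$ (resp.\ $S_1$) is governed by $\lfloor\wt_1(w)/2\rfloor\bmod 2$, this gives $\nu_{S_0}(z)+\nu_{S_1}(z)=\sum_{s=0}^a\binom{a}{s}[\,\lfloor(b+s)/2\rfloor\equiv\lfloor(c+s)/2\rfloor\bmod 2\,]$. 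Using $(-1)^{\lfloor k/2\rfloor}=\operatorname{Re}[(1-\sqm)(\sqm)^{k}]$ together with $\sum_{s}\binom{a}{s}(-1)^s=0$ for $a\ge1$, the cross terms collapse and I expect $\nu_{S_0}(z)+\nu_{S_1}(z)=2^{a-1}(1+\cos(\tfrac{\pi}{2}(b-c)))$, matching the left side after multiplying by $2^{2m+2}$. By Corollary~\ref{cor-lifting}, this proves that $S^\pr$ and $T^\pr$ form a primitive formally dual pair.

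For the difference spectrum I would use the formula for $T^\pr T^{\pr(-1)}$ recorded before Theorem~\ref{thm-lifting}: with $T_0=T$, $T_1=T^{(-1)}$ its coefficient at $(0,z)$ is $2\nu_T(z)$ and at $(1,z)$ is $[T^2]_z+[T^2]_{-z}$. Coordinatewise counting gives $\nu_T(z)=2^{\wt_0(z)}$ if $\wt_2(z)=0$ and $0$ otherwise, and $[T^2]_z=2^{\wt_1(z)}$ if $\wt_3(z)=0$ and $0$ otherwise, whence $[T^2]_{-z}=2^{\wt_3(z)}$ if $\wt_1(z)=0$ and $0$ otherwise. Partitioning the $(1,z)$-layer into the cases $\wt_1(z)=\wt_3(z)=0$, exactly one of $\wt_1(z),\wt_3(z)$ zero, and both positive, and the $(0,z)$-layer by $\wt_0(z)$, I would count, for each power $2^l$, how many $z$ realize it in each layer by elementary binomial bookkeeping: the value $2$ is produced $2^{2m}$ times from $\wt_0(z)=0$, $2^{2m}$ times from $\{0,2\}^{2m}$, and $2\cdot 2m\cdot 2^{2m-1}$ times from the weight-one cases, totalling $(m+1)2^{2m+1}$; for $2\le l\le 2m+1$ the $(0,z)$-layer contributes $2^{2m-l+1}\binom{2m}{l-1}$ and the two one-sided $(1,z)$-cases contribute $2^{2m-l}\binom{2m}{l}$ each, totalling $2^{2m-l+1}(\binom{2m}{l-1}+\binom{2m}{l})$; the remaining $z$ give coefficient $0$, whose multiplicity $2^{4m+1}-3^{2m+1}+2^{2m}$ follows by inclusion--exclusion. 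Assembling these yields the claimed multiset.

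The main obstacle is the clean evaluation of the binomial--trigonometric sum $\sum_{s}\binom{a}{s}(-1)^{\lfloor(b+s)/2\rfloor+\lfloor(c+s)/2\rfloor}$ in the formal-duality part, and, in the spectrum part, the careful treatment of the boundary cases ($l=1$ versus $l\ge2$, and $l=2m+1$ where the one-sided $(1,z)$-layers no longer contribute) together with the inclusion--exclusion count for the zero coefficient; a useful consistency check throughout is that the resulting multiplicities sum to $|\Z_2\times\Z_4^{2m}|=2^{4m+1}$.
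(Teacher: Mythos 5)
Your proposal is correct, and it follows the paper's overall skeleton --- product construction for the base pair $S=T=\prod_{j=1}^{2m}J$ in $\Z_4^{2m}$, Corollary~\ref{cor-lifting} to reduce the whole first assertion (primitivity included) to the single identity $|\chi_z(T+T^{(-1)})|^2=2^{2m+2}(\nu_{S_0}(z)+\nu_{S_1}(z))$, and the same two-layer decomposition $T^{\pr}T^{\pr(-1)}=2\sum_{x\in[TT^{(-1)}]}(0,x)+\sum_{x\in[TT+T^{(-1)}T^{(-1)}]}(1,x)$ for the spectrum --- but you execute the technical core differently. The paper evaluates $\nu_{S_0}(z)+\nu_{S_1}(z)$ via Lemma~\ref{lem-CB}: it decomposes $S_0S_0^{(-1)}+S_1S_1^{(-1)}$ into blocks $B_{m,i}B_{m,j}^{(-1)}$, shows each $z\in C_{m,u,v}$ receives coefficient $\binom{2m-u-v}{h}$ from the block $(i,j)=(u+h,v+h)$, and then runs a four-way case analysis on $u-v \bmod 4$ (the set $W$ and Table~\ref{tab-table1}), summing binomials over congruence classes of $h$. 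You instead parametrize the pairs $(x,y)$ with $x-y=z$ directly by the $s$ zero coordinates where $x=y=1$ and evaluate the one signed sum $\sum_{s}\binom{a}{s}(-1)^{\lfloor(b+s)/2\rfloor+\lfloor(c+s)/2\rfloor}$ using $(-1)^{\lfloor k/2\rfloor}=\operatorname{Re}[(1-\sqm)(\sqm)^{k}]$; I verified this sum equals $2^{a}\cos(\tfrac{\pi}{2}(b-c))$, so your closed form $2^{a-1}\bigl(1+\cos(\tfrac{\pi}{2}(b-c))\bigr)$ reproduces Proposition~\ref{prop-S0S0invS1S1inv} in all four residue cases at once --- a more unified evaluation that eliminates the table. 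Two small repairs: in the expansion it is the terms carrying $(\sqm)^{b+c+2s}$, not the cross terms, that are annihilated by $\sum_s\binom{a}{s}(-1)^s=0$ (the cross terms survive and yield the cosine); and when $a=0$ that alternating-sum argument is vacuous and your formula would read the non-integer $\tfrac12$ for $b-c$ odd, but then $b+c=2m$ forces $b-c$ even, so the identity still holds --- this deserves an explicit sentence, and it mirrors the implicit constraint $u+v\le 2m-1$ in the paper's odd cases. Your spectrum computation is the paper's argument (Lemma~\ref{lem-TTdiff} and Proposition~\ref{prop-TpTpinv}) in compressed form, with $\nu_T(z)=2^{\wt_0(z)}$ on $\{\wt_2(z)=0\}$ and $[TT]_z=2^{\wt_1(z)}$ on $\{\wt_3(z)=0\}$, and all your multiplicities --- the value $2$ giving $(m+1)2^{2m+1}$, the range $2\le l\le 2m+1$ including the boundary $l=2m+1$ where only the $(0,z)$-layer contributes, and the inclusion--exclusion count $2^{4m+1}-3^{2m+1}+2^{2m}$ for zero --- agree with the paper's.
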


\begin{remark}\label{rem-dircon1}
\quad
\begin{itemize}
\item[(1)] In Theorem~\ref{thm-dircon1}, the subset $S$ is partitioned into $S_0$ and $S_1$, depending on the value of $\wt_1(x) \bmod4$, for each $x \in S$. 
\item[(2)] Suppose $S=T=\prod_{j=1}^{2m+1} J$. Let $S^\pr$ be an arbitrary subset of $\Z_2 \times \Z_4^{2m+1}$ and $T^\pr$ be the same as \eqref{eqn-Tpdircon1}. Then, $S^{\pr}$ and $T^{\pr}$ cannot be a primitive formally dual pair. Indeed, let $z=(1,1,\ldots,1) \in \Z_4^{2m+1}$. Then $|\phi_{0,z}(T^{\pr})|^2=2^{2m+2}$. By \eqref{eqn-def2}, we derive that $\nu_{S^{\pr}}((0,z))=\frac{1}{2}$, which is impossible. A similar argument in a group of the form $\Z_2 \times \Z_4^{2m}$, does not lead to such a contradiction.
\end{itemize}
\end{remark}


We know that $J$ and $J$ form a primitive formally dual pair in $\Z_4$ \cite[Section 3.1]{CKRS}. By Proposition~\ref{prop-prod}, $S=\prod_{j=1}^{2m} J$ and $T=\prod_{j=1}^{2m} J$ form a primitive formally dual pair in $\Z_4^{2m}$. Note that the construction in Theorem~\ref{thm-dircon1} fits into the lifting construction framework \eqref{eqn-lifting2}. By Corollary~\ref{cor-lifting}, in order to prove that $S^{\pr}$ and $T^{\pr}$ form a primitive formally dual pair, it suffices to show that
\begin{equation}\label{eqn-eqn2}
|\chi_z(T+T^{(-1)})|^2=\frac{4|T|^2}{|S|}(\nu_{S_0}(z)+\nu_{S_1}(z)), \quad \mbox{for each $z \in \Z_4^{2m}$}.
\end{equation}

Now we proceed to compute the left and right hand sides of \eqref{eqn-eqn2}. Firstly, we consider the right hand side. To understand $S_0S_0^{(-1)}$ and $S_1S_1^{(-1)}$, we need to compute $B_{m,i}B_{m,j}^{(-1)}$. For this purpose, more notation is needed. For $0 \le u,v \le 2m$ and $u+v \le 2m$, define
$$
C_{m,u,v}=\{ x \in \Z_4^{2m} \mid \wt_1(x)=u, \wt_3(x)=v, \wt_0(x)=2m-u-v \}.
$$
Hereafter, when we write $C_{m,u,v}$, we always assume that $0 \le u,v \le 2m$ and $u+v \le 2m$ hold. For $j \in \Z_4$, define
$$
K_{m,j}=\{x=(x_1,x_2,\ldots,x_{2m}) \in \Z_4^{2m} \mid \mbox{$x_i=j$ for some $1 \le i \le 2m$}\}.
$$
Then $\Z_4^{2m}$ can be partitioned as
$$
\Z_4^{2m}=(\bigcup_{\substack{0 \le u,v \le 2m \\ u+v \le 2m}} C_{m,u,v}) \bigcup K_{m,2}.
$$

We use $\Sym(n)$ to denote the symmetric group defined on $n$ elements. For $z=(z_1,z_2,\ldots,z_{2m}) \in \Z_4^{2m}$ and $\sig \in \Sym(2m)$, define $\sig(z)=(z_{\sig(1)},z_{\sig(2)},\ldots,z_{\sig(2m)})$. The action of $\sig$ on the elements of $\Z_4^{2m}$ can be naturally extended to the action on a subset of $\Z_4^{2m}$. For instance, we have
$$
\sig(B_{m,i})=\{\sig(x) \mid x \in B_{m,i}\}=B_{m,i}.
$$
Moreover, by the definition of $C_{m,u,v}$, for each $y \in C_{m,u,v}$, we have
$$
C_{m,u,v}=\{ \sig(y) \mid \sig \in \Sym(2m)\}.
$$

The following lemma concerns $C_{m,u,v}$, as well as the relation between $C_{m,u,v}$ and $B_{m,i}B_{m,j}^{(-1)}$.

\begin{lemma}\label{lem-CB}
\begin{itemize}
\item[(1)] $C_{m,u,v} \subset [B_{m,i}B_{m,j}^{(-1)}]$ if and only if $i=u+h$ and $j=v+h$ for some $0 \le h \le 2m-u-v$.
\item[(2)] For each $x \in C_{m,u,v}$, we have $[B_{m,u+h}B_{m,v+h}^{(-1)}]_x=\binom{2m-u-v}{h}$, where $0 \le h \le 2m-u-v$.
\end{itemize}
\end{lemma}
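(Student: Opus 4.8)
The plan is to compute the coefficient $[B_{m,i}B_{m,j}^{(-1)}]_x$ directly, by counting the representations $x = a + b$ with $a$ in (the underlying set of) $B_{m,i}$ and $b$ in that of $B_{m,j}^{(-1)}$, and then to read off both statements from this count. First I would identify the set underlying $B_{m,j}^{(-1)}$. Since $-0 = 0$ and $-1 = 3$ in $\Z_4$, negating a word with $j$ ones and $2m-j$ zeros yields a word with $j$ threes and $2m-j$ zeros; hence $B_{m,j}^{(-1)}$ corresponds to the set of $b \in \Z_4^{2m}$ with $\wt_3(b) = j$ and $\wt_0(b) = 2m-j$. In particular every $a$ appearing in $B_{m,i}$ has coordinates in $\{0,1\}$, and every $b$ appearing in $B_{m,j}^{(-1)}$ has coordinates in $\{0,3\}$.

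Next I would analyze the sum coordinate by coordinate. For each coordinate $k$, the pair $(a_k,b_k)$ lies in $\{0,1\} \times \{0,3\}$, and the four possibilities give $a_k + b_k$ equal to $0, 3, 1, 0$ respectively. Thus the value $1$ arises only from $(1,0)$, the value $3$ only from $(0,3)$, the value $0$ from either $(0,0)$ or $(1,3)$, and the value $2$ never occurs. This already forces any $x$ with nonzero coefficient to satisfy $\wt_2(x) = 0$, matching the defining condition of $C_{m,u,v}$.

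Now fix $x \in C_{m,u,v}$, so that $x$ has $u$ ones, $v$ threes, and $2m-u-v$ zeros. In a representation $x = a + b$, the positions where $x_k = 1$ force $(a_k,b_k) = (1,0)$ and the positions where $x_k = 3$ force $(a_k,b_k) = (0,3)$; the only freedom is at the $2m-u-v$ positions where $x_k = 0$, each of which may be realized as $(0,0)$ or as $(1,3)$. If $h$ of these zero positions are realized as $(1,3)$, then $a$ has exactly $u+h$ ones and $b$ has exactly $v+h$ threes. Hence a representation with $a \in B_{m,i}$ and $b$ underlying $B_{m,j}^{(-1)}$ exists precisely when $i = u+h$ and $j = v+h$ for some $0 \le h \le 2m-u-v$, which gives part (1); and for such an $h$ the number of representations equals the number of ways to choose which $h$ of the $2m-u-v$ zero positions are the overlap positions, namely $\binom{2m-u-v}{h}$, which gives part (2).

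The argument is essentially routine once the coordinate analysis is in place; there is no deep obstacle, and the only point demanding care is the bookkeeping matching the exponents $i,j$ of the two factors to the parameters $u,v,h$ of $C_{m,u,v}$ (in particular the identity $h = i-u = j-v$). One could alternatively shorten the reasoning by first observing that both $B_{m,i}$ and $B_{m,j}^{(-1)}$ are invariant under the $\Sym(2m)$-action and that $\Sym(2m)$ acts transitively on $C_{m,u,v}$, so that the coefficient $[B_{m,i}B_{m,j}^{(-1)}]_x$ is automatically constant on $C_{m,u,v}$; but the direct count above already exhibits this constancy and delivers its value.
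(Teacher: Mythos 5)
Your proof is correct and follows essentially the same counting as the paper's: coordinates of $x$ equal to $1$ or $3$ force the corresponding entries of the two factors, while each of the $2m-u-v$ zero coordinates admits exactly the two realizations $(0,0)$ and $(1,3)$, so that choosing the $h$ overlap positions yields the coefficient $\binom{2m-u-v}{h}$ and, since this is positive, part (1) as well. The only stylistic difference is that you compute the coefficient directly for an arbitrary $x \in C_{m,u,v}$, whereas the paper first uses the transitive $\Sym(2m)$-action to reduce to a canonical representative---an alternative you yourself note at the end.
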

\begin{proof}
(1) Suppose $i=u+h$ and $j=v+h$ for some $0 \le h \le 2m-u-v$. Set
$$
y=(\underbrace{1,\ldots,1}_{u},\underbrace{0,\ldots,0}_{h},\underbrace{3,\ldots,3}_{v},\underbrace{0,\ldots,0}_{2m-u-v-h}) \in C_{m,u,v}.
$$
Then $y$ can be expressed as $y_1y_2^{-1}$, where
$$
y_1=(\underbrace{1,\ldots,1}_{u},\underbrace{1,\ldots,1}_{h},\underbrace{0,\ldots,0}_{v},\underbrace{0,\ldots,0}_{2m-u-v-h}) \in B_{m,u+h}
$$
and
$$
y_2=(\underbrace{0,\ldots,0}_{u},\underbrace{1,\ldots,1}_{h},\underbrace{1,\ldots,1}_{v}\underbrace{0,\ldots,0}_{2m-u-v-h}) \in B_{m,v+h}.
$$
Thus, $y \in [B_{u+h}B_{v+h}^{(-1)}]$. Recall that $C_{m,u,v}=\{ \sig(y) \mid \sig \in \Sym(2m)\}$. For each $\sig(y) \in C_{m,u,v}$, we can see that $\sig(y)=\sig(y_1)\sig(y_2)^{-1}$, where $\sig(y_1) \in B_{m,u+h}$ and $\sig(y_2) \in B_{m,v+h}$. Thus, $C_{m,u,v} \subset [B_{m,i}B_{m,j}^{(-1)}]$. Conversely, suppose $C_{m,u,v} \subset [B_{m,i}B_{m,j}^{(-1)}]$. Then there exist $z_1 \in B_{m,i}$ and $z_2 \in B_{m,j}$, such that $y=z_1z_2^{-1}$. Suppose there are exactly $h$ coordinates of $z_1$ and $z_2$ both with entry $1$. Then $y=z_1z_2^{-1} \in C_{m,u,v}$ implies that $i=u+h$ and $j=v+h$, where $0 \le h \le 2m-u-v$.

(2) Let $x$ and $y$ be two distinct elements of $C_{m,u,v}$. Since $C_{m,u,v}=\{ \sig(x) \mid \sig \in \Sym(2m)\}$, there exists $\sig_0 \in \Sym(2m)$, such that $y=\sig_0(x)$. For some $0 \le h \le 2m-u-v$, let $x_1 \in B_{m,u+h}$ and $x_2 \in B_{m,v+h}$, such that $x=x_1x_2^{-1}$. Then, we have $y=\sig_0(x)=\sig_0(x_1)\sig_0(x_2)^{-1}$, where $\sig_0(x_1) \in B_{m,u+h}$ and $\sig_0(x_2) \in B_{m,v+h}$. Consequently, we can see that $[B_{m,u+h}B_{m,v+h}^{(-1)}]_x=[B_{m,u+h}B_{m,v+h}^{(-1)}]_y$ for all $x,y \in C_{m,u,v}$. Without loss of generality, we can assume that
$$
x=(\underbrace{1,\ldots,1}_{u},\underbrace{3,\ldots,3}_{v},\underbrace{0,\ldots,0}_{2m-u-v}).
$$
This forces
$$
x_1=(\underbrace{1,\ldots,1}_{u},\underbrace{0,\ldots,0}_{v},\underbrace{\star,\ldots,\star}_{2m-u-v}),
$$
and
$$
x_2=(\underbrace{0,\ldots,0}_{u},\underbrace{1,\ldots,1}_{v},\underbrace{\star,\ldots,\star}_{2m-u-v}),
$$
where for each of the last $2m-u-v$ coordinates, the two entries in $x_1$ and $x_2$ are either both $0$ or both $1$, and exactly $h$ coordinates containing both $1$. Therefore, we conclude that $[B_{m,u+h}B_{m,v+h}^{(-1)}]_x=\binom{2m-u-v}{h}$ for each $x \in C_{m,u,v}$.
\end{proof}

Employing Lemma~\ref{lem-CB}, we can determine the multiset $[S_0S_0^{(-1)}+S_1S_1^{(-1)}]$.

\begin{proposition}\label{prop-S0S0invS1S1inv}
Let $S_0$ and $S_1$ be the subsets defined in \eqref{eqn-Sdircon1}. For $z \in \Z_4^{2m}$, we have
\begin{equation*}
  [S_0S_0^{(-1)}+S_1S_1^{(-1)}]_z=\begin{cases}
    0 & \mbox{if $z \in K_{m,2}$,} \\
    2^{2m-u-v} & \mbox{if $z \in C_{m,u,v}$, $u-v\equiv 0 \bmod 4$,} \\
    2^{2m-u-v-1} & \mbox{if $z \in C_{m,u,v}$, $u-v\equiv 1,3 \bmod 4$,} \\
    0 & \mbox{if $z \in C_{m,u,v}$, $u-v\equiv 2 \bmod 4$.}
  \end{cases}
\end{equation*}
\end{proposition}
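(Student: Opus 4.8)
The plan is to expand $S_0S_0^{(-1)}+S_1S_1^{(-1)}$ into a sum of products $B_{m,i}B_{m,j}^{(-1)}$ and to read off the coefficient at a fixed $z$ using Lemma~\ref{lem-CB}. Since $S_0=\sum_{i\equiv0,1}B_{m,i}$ and $S_1=\sum_{i\equiv2,3}B_{m,i}$ (residues modulo $4$), a product $B_{m,i}B_{m,j}^{(-1)}$ appears in $S_0S_0^{(-1)}$ exactly when $i,j\equiv0,1\bmod4$ and in $S_1S_1^{(-1)}$ exactly when $i,j\equiv2,3\bmod4$. First I would dispose of the case $z\in K_{m,2}$: every coordinate of an element of $B_{m,i}$ lies in $\{0,1\}$, so every coordinate of an element of $[B_{m,i}B_{m,j}^{(-1)}]$ lies in $\{0,1,3\}\subset\Z_4$, whence no element having a coordinate equal to $2$ can occur and the coefficient on $K_{m,2}$ is $0$.

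For $z\in C_{m,u,v}$ I would set $N=2m-u-v$. By Lemma~\ref{lem-CB}, $[B_{m,i}B_{m,j}^{(-1)}]_z$ is nonzero precisely when $(i,j)=(u+h,v+h)$ for some $0\le h\le N$, and then equals $\binom{N}{h}$. Combined with the membership rule above, the coefficient $[S_0S_0^{(-1)}+S_1S_1^{(-1)}]_z$ reduces to $\sum_{h\in H}\binom{N}{h}$, where $H$ is the set of $h$ with $0\le h\le N$ for which $u+h$ and $v+h$ lie in a common residue half, the two halves modulo $4$ being $\{0,1\}$ and $\{2,3\}$. The crux is that membership in $H$ is governed by $u-v\bmod4$: a direct check of the four possible values of $(v+h)\bmod4$ shows that $u+h$ and $v+h$ agree in half whenever $u-v\equiv0$, never agree when $u-v\equiv2$, and agree for exactly those $h$ of one fixed parity when $u-v\equiv1,3$.

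It then remains to evaluate the restricted binomial sums. When $u-v\equiv0\bmod4$ the sum is $\sum_{h=0}^{N}\binom{N}{h}=2^{N}=2^{2m-u-v}$, and when $u-v\equiv2\bmod4$ it is empty and equals $0$, matching the stated values. For $u-v\equiv1,3\bmod4$ I would use the identity that, for $N\ge1$, the coefficients $\binom{N}{h}$ summed over $h$ of a single parity total $2^{N-1}$, which is immediate from $\sum_h(-1)^h\binom{N}{h}=0$. The one subtlety is guaranteeing $N\ge1$: since $2m$ is even we have $N\equiv u+v\equiv u-v\bmod2$, so $u-v$ odd forces $N$ odd and hence $N\ge1$ (this also rules out the degenerate $N=0$ case here). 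Thus the sum equals $2^{N-1}=2^{2m-u-v-1}$, completing the argument. I expect the main obstacle to be the careful mod-$4$ bookkeeping that pins down $H$ in the middle step, together with the parity observation that secures $N\ge1$ so that the single-parity binomial sum is exactly $2^{N-1}$.
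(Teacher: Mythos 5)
Your proof is correct and takes essentially the same route as the paper's: the same expansion of $S_0S_0^{(-1)}+S_1S_1^{(-1)}$ into the products $B_{m,i}B_{m,j}^{(-1)}$, the same application of Lemma~\ref{lem-CB} to reduce the coefficient at $z \in C_{m,u,v}$ to a restricted binomial sum over $h$ with $(u+h,v+h)$ in a common residue half modulo $4$, and the same case analysis on $u-v \bmod 4$. Your direct parity check in place of the paper's explicit Table~\ref{tab-table1}, and your observation that $u-v$ odd forces $N=2m-u-v$ odd (hence $N\ge 1$, so the single-parity sum is exactly $2^{N-1}$), are minor streamlinings of the same argument.
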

\begin{proof}
By the definitions of $S_0$ and $S_1$, we have
\begin{equation}\label{eqn-S0S0inv}
S_0S_0^{(-1)}=\sum_{\substack{0 \le i,j \le 2m \\ i,j \equiv 0,1 \bmod4}} B_{m,i}B_{m,j}^{(-1)}, \quad S_1S_1^{(-1)}=\sum_{\substack{0 \le i,j \le 2m \\ i,j \equiv 2,3 \bmod4}} B_{m,i}B_{m,j}^{(-1)}
\end{equation}
Clearly, $[S_0S_0^{(-1)}+S_1S_1^{(-1)}]_z=0$ for each $z \in K_{m,2}$.

Next, we consider the case $z \in C_{m,u,v}$. Denote
$$
W=\{(0,0),(0,1),(1,0),(1,1),(2,2),(2,3),(3,2),(3,3)\} \subset (\{0,1,2,3\} \times \{0,1,2,3\}).
$$
For $z \in C_{m,u,v}$, by Lemma~\ref{lem-CB}(1) and \eqref{eqn-S0S0inv}, $z \in [S_0S_0^{(-1)}+S_1S_1^{(-1)}]$ if and only if there exists some $0 \le h \le 2m-u-v$, such that $((u+h) \bmod4, (v+h) \bmod4) \in W$.

If $u-v\equiv 0 \bmod4$, then for each $0 \le h \le 2m-u-v$, we have $((u+h) \bmod4, (v+h) \bmod4) \in W$. By Lemma~\ref{lem-CB}(2), $[S_0S_0^{(-1)}+S_1S_1^{(-1)}]_z=\sum_{h=0}^{2m-u-v} \binom{2m-u-v}{h}=2^{2m-u-v}$.

If $u-v \equiv 1 \bmod 4$, Table~\ref{tab-table1} lists all the possible triples $(u,v,h)$, such that $((u+h) \bmod4, (v+h) \bmod4) \in W$. Consequently, we have either
$$
[S_0S_0^{(-1)}+S_1S_1^{(-1)}]_z=\sum_{\substack{0 \le h \le 2m-u-v \\ h \equiv 0 \bmod2}} \binom{2m-u-v}{h}=2^{2m-u-v-1}
$$
or
$$
[S_0S_0^{(-1)}+S_1S_1^{(-1)}]_z=\sum_{\substack{0 \le h \le 2m-u-v \\ h \equiv 1 \bmod2}} \binom{2m-u-v}{h}=2^{2m-u-v-1}.
$$

\begin{table}[h]
\begin{center}
\caption{All $(u,v,h)$ triples satisfying $((u+h) \bmod4, (v+h) \bmod4) \in W$}\label{tab-table1}
\begin{tabular}{|c|c|} \hline
$(u,v)$                                     & $h$  \\ \hline
$u \equiv 1 \bmod4$, $v \equiv 0 \bmod4$  & $h \equiv 0,2 \bmod 4$  \\ \hline
$u \equiv 2 \bmod4$, $v \equiv 1 \bmod4$  & $h \equiv 1,3 \bmod 4$   \\ \hline
$u \equiv 3 \bmod4$, $v \equiv 2 \bmod4$  & $h \equiv 0,2 \bmod 4$   \\ \hline
$u \equiv 0 \bmod4$, $v \equiv 3 \bmod4$  & $h \equiv 1,3 \bmod 4$  \\ \hline
\end{tabular}
\end{center}
\end{table}

If $u-v\equiv 2 \bmod4$, then there exists no $0 \le h \le 2m-u-v$, such that $((u+h) \bmod4, (v+h) \bmod4) \in W$. Hence, $[S_0S_0^{(-1)}+S_1S_1^{(-1)}]_z=0$.

If $u-v \equiv 3 \bmod 4$, a similar argument as in the case of $u-v \equiv 1 \bmod 4$ gives
$$
[S_0S_0^{(-1)}+S_1S_1^{(-1)}]_z=2^{2m-u-v-1}.
$$
\end{proof}

Next, we compute the left hand side of \eqref{eqn-eqn2}.

\begin{proposition}\label{prop-TTinv}
Let $T=\prod_{j=1}^{2m} J$. For $z \in \Z_4^{2m}$, we have
$$
|\chi_z(T+T^{(-1)})|^2=\begin{cases}
  0 & \mbox{if $z \in K_{m,2}$,} \\
  2^{4m+2-u-v} & \mbox{if $z \in C_{m,u,v}$, $u-v \equiv 0 \bmod4$,} \\
  2^{4m+1-u-v} & \mbox{if $z \in C_{m,u,v}$, $u-v \equiv 1,3 \bmod4$,} \\
  0 & \mbox{if $z \in C_{m,u,v}$, $u-v \equiv 2 \bmod4$.}
\end{cases}
$$
\end{proposition}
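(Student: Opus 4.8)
The plan is to exploit the product structure of $T=\prod_{j=1}^{2m} J$ to factor the character sum coordinate-by-coordinate. Since $\chi_z(b)=(\sqm)^{z\cdot b}$ is multiplicative across coordinates and $T$ is a direct product, I would first write
$$
\chi_z(T)=\prod_{i=1}^{2m}\bigl(1+(\sqm)^{z_i}\bigr),
$$
and then evaluate each factor according to $z_i\in\Z_4$: the factor equals $2$ when $z_i=0$, equals $1+\sqm$ when $z_i=1$, equals $0$ when $z_i=2$, and equals $1-\sqm$ when $z_i=3$. This immediately disposes of the case $z\in K_{m,2}$, since a single coordinate equal to $2$ forces a zero factor and hence $\chi_z(T)=0$, giving $|\chi_z(T+T^{(-1)})|^2=0$.

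The key simplifying observation is that $T^{(-1)}=\{-x\mid x\in T\}$ gives $\chi_z(T^{(-1)})=\overline{\chi_z(T)}$, because $\chi_z(-x)=(\sqm)^{-z\cdot x}=\overline{\chi_z(x)}$. Therefore
$$
\chi_z(T+T^{(-1)})=\chi_z(T)+\overline{\chi_z(T)}=2\,\mathrm{Re}\,\chi_z(T),
$$
so that $|\chi_z(T+T^{(-1)})|^2=4\,(\mathrm{Re}\,\chi_z(T))^2$, and everything reduces to computing $\chi_z(T)$ in polar form.

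For $z\in C_{m,u,v}$, where $\wt_1(z)=u$, $\wt_3(z)=v$, $\wt_0(z)=2m-u-v$, and no coordinate equals $2$, collecting the factors yields
$$
\chi_z(T)=2^{2m-u-v}(1+\sqm)^u(1-\sqm)^v.
$$
Writing $1+\sqm=\sqrt{2}\,e^{\pi\sqm/4}$ and $1-\sqm=\sqrt{2}\,e^{-\pi\sqm/4}$ gives $\chi_z(T)=2^{2m-(u+v)/2}\,e^{(u-v)\pi\sqm/4}$, whence
$$
|\chi_z(T+T^{(-1)})|^2=4\cdot 2^{4m-(u+v)}\cos^2\!\Bigl(\tfrac{(u-v)\pi}{4}\Bigr)=2^{4m+2-u-v}\cos^2\!\Bigl(\tfrac{(u-v)\pi}{4}\Bigr).
$$

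Finally I would read off the four cases from $(u-v)\bmod 4$, using that $\cos^2(\theta)$ has period $\pi$: the squared cosine equals $1,\tfrac12,0,\tfrac12$ as $u-v\equiv 0,1,2,3\pmod 4$, producing the exponents $4m+2-u-v$, $4m+1-u-v$, $0$, $4m+1-u-v$, exactly as stated. There is no real obstacle in this argument, since the whole computation is direct once the coordinate factorization and the conjugate symmetry $\chi_z(T^{(-1)})=\overline{\chi_z(T)}$ are in place; the only point requiring mild care is keeping the magnitude $2^{2m-(u+v)/2}$ and the phase $e^{(u-v)\pi\sqm/4}$ straight so that the case distinction on $(u-v)\bmod 4$ lands on the claimed values.
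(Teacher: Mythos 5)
Your proposal is correct and follows essentially the same route as the paper's proof: factor $\chi_z(T)$ coordinate-by-coordinate using the values of $\chi_y(J)$ for $y\in\Z_4$, note that $\chi_z(T^{(-1)})=\overline{\chi_z(T)}$ so that $\chi_z(T+T^{(-1)})=2\,\mathrm{Re}\,\chi_z(T)$, pass to polar form $2^{2m-(u+v)/2}e^{(u-v)\pi\sqm/4}$, and read off the four cases from $\cos^2\bigl(\tfrac{(u-v)\pi}{4}\bigr)$. There is nothing to add; the argument is complete and matches the paper step for step.
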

\begin{proof}
For $y \in \Z_4$, it is easy to see that
$$
\chi_y(J)=\begin{cases}
  0 & \mbox{if $y=2$,} \\
  1+\sqm & \mbox{if $y=1$,} \\
  1-\sqm & \mbox{if $y=3$,} \\
  2 & \mbox{if $y=0$.}
\end{cases}
$$
Consequently, for $z=(z_1,z_2,\ldots,z_{2m}) \in \Z_4^{2m}$, we have
\begin{align*}
\chi_z(\prod_{j=1}^{2m} J)=\prod_{i=1}^{2m} \chi_{z_i}(J)&=\begin{cases}
  0 & \mbox{if $z \in K_{m,2}$,} \\
  (1+\sqm)^u(1-\sqm)^v2^{2m-u-v} & \mbox{if $z \in C_{m,u,v}$,}
\end{cases}\\
&=\begin{cases}
  0 & \mbox{if $z \in K_{m,2}$,} \\
  2^{2m-\frac{u+v}{2}}(\cos{\frac{(u-v)\pi}{4}}+\sqm\sin{\frac{(u-v)\pi}{4}}) & \mbox{if $z \in C_{m,u,v}$.}
\end{cases}
\end{align*}
Therefore, by the definition of $T$, we know that
$$
\chi_z(T+T^{(-1)})=\chi_z(\prod_{j=1}^{2m} J)+\ol{\chi_z(\prod_{j=1}^{2m} J)}=\begin{cases}
  0 & \mbox{if $z \in K_{m,2}$,} \\
  2^{2m+1-\frac{u+v}{2}}\cos{\frac{(u-v)\pi}{4}} & \mbox{if $z \in C_{m,u,v}$.}
\end{cases}
$$
Hence, we have
$$
|\chi_z(T+T^{(-1)})|^2=\begin{cases}
  0 & \mbox{if $z \in K_{m,2}$,} \\
  2^{4m+2-u-v}\cos^2{\frac{(u-v)\pi}{4}} & \mbox{if $z \in C_{m,u,v}$,}
\end{cases}
$$
which completes the proof.
\end{proof}

In the following, we proceed to compute the difference spectrum $[[T^{\pr}T^{\pr(-1)}]_g \mid g \in \Z_2 \times \Z_4^{2m}]$. For $0 \le u,v \le 2m$ and $u+v \le 2m$, define
$$
D_{m,u,v}=\{ x \in \Z_4^{2m} \mid \wt_1(x)=u, \wt_2(x)=v, \wt_0(x)=2m-u-v \}.
$$
Hereafter, when we write $D_{m,u,v}$, we always assume that $0 \le u,v \le 2m$ and $u+v \le 2m$ hold. By definition, $\Z_4^{2m}$ can be partitioned as
$$
\Z_4^{2m}=(\bigcup_{\substack{0 \le u,v \le 2m \\ u+v \le 2m}} D_{m,u,v}) \bigcup K_{m,3}=(\bigcup_{\substack{0 \le u,v \le 2m \\ u+v \le 2m}} D_{m,u,v}^{(-1)}) \bigcup K_{m,1}.
$$

The following is a preparatory lemma.

\begin{lemma}\label{lem-TTdiff}
Let $T=\prod_{j=1}^{2m} J$.
\begin{itemize}
\item[(1)]
For $x \in \Z_4^{2m}$, we have
$$
[TT^{(-1)}]_x=\begin{cases}
  0 & \mbox{if $x \in K_{m,2}$,} \\
  2^{l} & \mbox{if $x \in C_{m,u,2m-l-u}$, where $0 \le u \le 2m-l$ and $0 \le l \le 2m$.}
\end{cases}
$$
\item[(2)] For $x \in \Z_4^{2m}$, we have
$$
[TT]_x=\begin{cases}
  0 & \mbox{if $x \in K_{m,3}$,} \\
  2^{u} & \mbox{if $x \in D_{m,u,v}$, where $0 \le u \le 2m$.}
\end{cases}
$$
\item[(3)] For $x \in \Z_4^{2m}$, we have
$$
[T^{(-1)}T^{(-1)}]_x=\begin{cases}
  0 & \mbox{if $x \in K_{m,1}$,} \\
  2^{u} & \mbox{if $x \in D_{m,u,v}^{(-1)}$, where $0 \le u \le 2m$.}
\end{cases}
$$
\end{itemize}
\end{lemma}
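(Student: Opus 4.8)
The plan is to exploit the product structure of $T=\prod_{j=1}^{2m}J$. Since $\Z_4^{2m}$ is the direct product of $2m$ copies of $\Z_4$, the group ring $\Z[\Z_4^{2m}]$ is the corresponding tensor product of the $\Z[\Z_4]$, and $T$ is the ring element obtained by multiplying the $2m$ single-coordinate copies of $J=0+1\in\Z[\Z_4]$. Consequently each of $TT^{(-1)}$, $TT$ and $T^{(-1)}T^{(-1)}$ factors coordinatewise, and the coefficient at a vector $x=(x_1,\ldots,x_{2m})$ is the product of single-coordinate coefficients:
\begin{equation*}
[TT^{(-1)}]_x=\prod_{i=1}^{2m}[JJ^{(-1)}]_{x_i},\qquad [TT]_x=\prod_{i=1}^{2m}[JJ]_{x_i},\qquad [T^{(-1)}T^{(-1)}]_x=\prod_{i=1}^{2m}[J^{(-1)}J^{(-1)}]_{x_i}.
\end{equation*}
So the whole lemma reduces to computing three products in the tiny ring $\Z[\Z_4]$.

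First I would record these single-coordinate products. Since $J=0+1$ and $J^{(-1)}=0+3$, a direct expansion gives $JJ^{(-1)}=2\cdot 0+1+3$, $JJ=0+2\cdot 1+2$, and $J^{(-1)}J^{(-1)}=0+2+2\cdot 3$. The key observation is which element of $\Z_4$ is \emph{missing} in each: the coefficient of $2$ vanishes in $JJ^{(-1)}$ (a difference of two elements of $\{0,1\}$ is never $2$), the coefficient of $3$ vanishes in $JJ$ (a sum of two elements of $\{0,1\}$ is never $3$), and the coefficient of $1$ vanishes in $J^{(-1)}J^{(-1)}$. Among the nonzero entries, exactly one group element carries coefficient $2$ while the others carry coefficient $1$.

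Multiplying coordinatewise then reads off all three statements. For part~(1): if some $x_i=2$ the factor $[JJ^{(-1)}]_{x_i}=0$, which is precisely the case $x\in K_{m,2}$, so $[TT^{(-1)}]_x=0$ there. Otherwise every $x_i\in\{0,1,3\}$, the factor equals $2$ exactly when $x_i=0$ and equals $1$ otherwise, so $[TT^{(-1)}]_x=2^{\wt_0(x)}$; writing $x\in C_{m,u,v}$ with $\wt_0(x)=2m-u-v$ and setting $l=2m-u-v$ recovers the stated $2^l$. Part~(2) is identical with the roles of $2$ and $3$ swapped: the factor $[JJ]_{x_i}$ kills any $x\in K_{m,3}$, and on $D_{m,u,v}$ the factor equals $2$ exactly when $x_i=1$, giving $[TT]_x=2^{\wt_1(x)}=2^u$. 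Part~(3) I would deduce at once from part~(2) via the identity $T^{(-1)}T^{(-1)}=(TT)^{(-1)}$, which yields $[T^{(-1)}T^{(-1)}]_x=[TT]_{x^{-1}}$; since negation maps $K_{m,1}$ onto $K_{m,3}$ and $D_{m,u,v}^{(-1)}$ onto $D_{m,u,v}$, the case distinction transports verbatim.

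There is no genuine obstacle here; the argument is a routine convolution once the tensor factorization is set up. The only point demanding care is the bookkeeping between the two parametrizations, namely translating the intrinsic quantities $\wt_0(x)$ and $\wt_1(x)$ into the indices $(u,v,l)$ used in the statement, and verifying that the ``missing element'' in each single-coordinate product matches the corresponding exceptional set $K_{m,2}$, $K_{m,3}$, $K_{m,1}$. Handling the $(-1)$ symmetry in part~(3) abstractly, rather than recomputing $J^{(-1)}J^{(-1)}$ by hand, keeps the write-up short.
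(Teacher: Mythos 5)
Your proposal is correct, and it takes a cleaner route than the paper. The paper proves part (2) by first showing the coefficient $[TT]_x$ is constant on each class $D_{m,u,v}$ via the $\Sym(2m)$-action (any two elements of $D_{m,u,v}$ are permutations of each other, and permuting a factorization $x=x_1x_2$ with $x_1,x_2\in T$ gives a factorization of the permuted element), then counting factorizations at the canonical representative $x=(1,\ldots,1,2,\ldots,2,0,\ldots,0)$ coordinate by coordinate, obtaining $2^u$; it then declares (1) ``similar'' and (3) a consequence of (2). You instead exploit the tensor decomposition $\Z[\Z_4^{2m}]\cong\Z[\Z_4]^{\otimes 2m}$ with $T=J^{\otimes 2m}$, so each coefficient factors as $\prod_i[JJ]_{x_i}$ (and analogously for the other two products), reducing the entire lemma to the three two-line computations $JJ^{(-1)}=2\cdot 0+1+3$, $JJ=0+2\cdot 1+2$, $J^{(-1)}J^{(-1)}=0+2+2\cdot 3$, all of which you state correctly. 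This buys you constancy on the classes $C_{m,u,v}$ and $D_{m,u,v}$ for free, a uniform treatment of all three parts, and correct bookkeeping ($[TT^{(-1)}]_x=2^{\wt_0(x)}=2^l$ on $C_{m,u,2m-l-u}$, $[TT]_x=2^{\wt_1(x)}=2^u$ on $D_{m,u,v}$); your explicit use of $T^{(-1)}T^{(-1)}=(TT)^{(-1)}$ together with negation carrying $K_{m,1}$ to $K_{m,3}$ and $D_{m,u,v}^{(-1)}$ to $D_{m,u,v}$ is the same reduction the paper invokes implicitly, just made precise. The one thing the paper's orbit-plus-representative technique buys that your factorization does not is generality: the paper reuses exactly that technique in Lemma~\ref{lem-CB} to compute $[B_{m,u+h}B_{m,v+h}^{(-1)}]_x$, where the sets $B_{m,i}$ are \emph{not} product sets and no coordinatewise factorization is available, so the authors keep a single method throughout; for the present lemma, where $T$ does have product structure, your argument is shorter and arguably preferable.
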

\begin{proof}
We only prove (2), since the proof of (1) is similar and (3) follows from (2). Clearly, $[TT]_x=0$ for each $x \in K_{m,3}$. Let $x$ and $y$ be two distinct elements of $D_{m,u,v}$. For each $x \in D_{m,u,v}$, by the definition of $D_{m,u,v}$, we have $D_{m,u,v}=\{\sig(x) \mid \sig \in \Sym(2m)\}$. Therefore, there exists $\sig_0 \in \Sym(2m)$, such that $y=\sig_0(x)$. Suppose $x=x_1x_2$, where $x_1,x_2 \in T$. Then we have $y=\sig_0(x)=\sig_0(x_1)\sig_0(x_2)$, where $\sig_0(x_1), \sig_0(x_2) \in T$. Consequently, we can see that $[TT]_x=[TT]_y$ for all $x,y \in D_{m,u,v}$. Without loss of generality, we can assume that
$$
x=(\underbrace{1,\ldots,1}_{u},\underbrace{2,\ldots,2}_{v},\underbrace{0,\ldots,0}_{2m-u-v}).
$$
This forces
$$
x_1=(\underbrace{\star,\ldots,\star}_{u},\underbrace{1,\ldots,1}_{v},\underbrace{0,\ldots,0}_{2m-u-v}),
$$
and
$$
x_2=(\underbrace{\star,\ldots,\star}_{u},\underbrace{1,\ldots,1}_{v},\underbrace{0,\ldots,0}_{2m-u-v}),
$$
where for each of the first $u$ coordinates, the two entries in $x_1$ and $x_2$ are $0$ and $1$. Hence, we conclude that $[TT]_x=2^u$ for each $x \in D_{m,u,v}$.
\end{proof}

Now we can compute the multiset $[[T^{\pr}T^{\pr(-1)}]_g \mid g \in \Z_2 \times \Z_4^{2m}]$.

\begin{proposition}\label{prop-TpTpinv}
Let $T^\pr$ be the subset of $\Z_2 \times \Z_4^{2m}$ defined in \eqref{eqn-Tpdircon1}, then
\begin{align*}
&[[T^{\pr}T^{\pr(-1)}]_g \mid g \in \Z_2 \times \Z_4^{2m}] \\
=&[0\lan 2^{4m+1}-3^{2m+1}+2^{2m}\ran, 2\lan(m+1)2^{2m+1}\ran,2^l\lan 2^{2m-l+1}\big(\binom{2m}{l-1}+\binom{2m}{l}\big)\ran \mid 2 \le l \le 2m+1].
\end{align*}
\end{proposition}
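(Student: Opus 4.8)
The plan is to compute $T^{\pr}T^{\pr(-1)}$ explicitly in the group ring $\Z[\Z_2 \times \Z_4^{2m}]$, split it according to the first coordinate $w \in \Z_2$, and then collect the coefficients into the claimed multiset. Specializing the formula for $T^{\pr}T^{\pr(-1)}$ from the proof of Theorem~\ref{thm-lifting} to $T_0=T$ and $T_1=T^{(-1)}$, and using that $\Z_4^{2m}$ is abelian (so $T^{(-1)}(T^{(-1)})^{(-1)}=TT^{(-1)}$ and $T(T^{(-1)})^{(-1)}=TT$), the $w=0$ slice has coefficients $[2TT^{(-1)}]_z$ while the $w=1$ slice has coefficients $[TT+T^{(-1)}T^{(-1)}]_z$. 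Hence the difference spectrum is the union of the two multisets $[\,2[TT^{(-1)}]_z \mid z \in \Z_4^{2m}\,]$ and $[\,[TT]_z+[T^{(-1)}T^{(-1)}]_z \mid z \in \Z_4^{2m}\,]$, and it suffices to enumerate each and merge.

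For the $w=0$ slice I would apply Lemma~\ref{lem-TTdiff}(1): the coefficient vanishes on $K_{m,2}$, contributing $0$ with multiplicity $|K_{m,2}|=2^{4m}-3^{2m}$, and equals $2^{2m-u-v+1}$ on each $C_{m,u,v}$. Grouping by $l:=2m-u-v+1$ and using the identity $\sum_{u+v=s}|C_{m,u,v}|=\binom{2m}{s}2^{s}$ (which counts the elements of $\Z_4^{2m}$ with $\wt_2=0$ and exactly $s$ entries in $\{1,3\}$), I obtain that $2^{l}$ arises from this slice with multiplicity $\binom{2m}{l-1}2^{2m-l+1}$ for $1 \le l \le 2m+1$.

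For the $w=1$ slice I would use Lemma~\ref{lem-TTdiff}(2),(3), noting that $[TT]_z$ is supported on $z \notin K_{m,3}$ with value $2^{\wt_1(z)}$, while $[T^{(-1)}T^{(-1)}]_z$ is supported on $z \notin K_{m,1}$ with value $2^{\wt_3(z)}$. I would then split according to whether $\wt_1(z)$ and $\wt_3(z)$ are zero. When both are positive the coefficient is $0$, with multiplicity $2^{4m}-2\cdot 3^{2m}+2^{2m}$ by inclusion–exclusion. When $\wt_1(z)=u\ge 1,\ \wt_3(z)=0$ (or symmetrically $\wt_1(z)=0,\ \wt_3(z)=u\ge 1$) the coefficient is $2^{u}$, each case occurring with multiplicity $\binom{2m}{u}2^{2m-u}$. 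When $\wt_1(z)=\wt_3(z)=0$, i.e. $z$ has all entries in $\{0,2\}$, both diagonal terms equal $2^{0}=1$, so the coefficient is $2$ with multiplicity $2^{2m}$.

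Finally I would merge the two slices coefficient by coefficient. For $2 \le l \le 2m+1$ the value $2^{l}$ receives $\binom{2m}{l-1}2^{2m-l+1}$ from $w=0$ and $2\binom{2m}{l}2^{2m-l}=\binom{2m}{l}2^{2m-l+1}$ from the two $w=1$ cases with $u=l$, giving the claimed $2^{2m-l+1}\big(\binom{2m}{l-1}+\binom{2m}{l}\big)$; the value $0$ collects $|K_{m,2}|$ and the both-positive count, summing to $2^{4m+1}-3^{2m+1}+2^{2m}$. The one genuinely delicate point, which I expect to require the most care, is the coefficient $2$ (i.e.\ $l=1$): here the $w=0$ slice gives $2^{2m}$, the two $w=1$ cases with $u=1$ give $2m\cdot 2^{2m}$ in total, and the $\{0,2\}$-overlap contributes an \emph{extra} $2^{2m}$, so the total is $(2m+2)2^{2m}=(m+1)2^{2m+1}$ rather than the uniform value $\big(\binom{2m}{0}+\binom{2m}{1}\big)2^{2m}$. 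This surplus $2^{2m}$ is precisely why the coefficient $2$ must be recorded separately from the generic range. As a consistency check, summing all listed multiplicities via $\sum_{k}\binom{2m}{k}2^{2m-k}=3^{2m}$ should recover $|\Z_2 \times \Z_4^{2m}|=2^{4m+1}$.
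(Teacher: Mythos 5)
Your proposal is correct and takes essentially the same route as the paper's proof: the same decomposition of $T^{\pr}T^{\pr(-1)}$ into the $w=0$ and $w=1$ slices, the same appeal to Lemma~\ref{lem-TTdiff}, and the same counting, with your case split on whether $\wt_1(z)$ and $\wt_3(z)$ vanish being just a rephrasing of the paper's use of the sets $D_{m,u,v}$ together with the observation that $D_{m,0,v}=D_{m,0,v}^{(-1)}$, which yields exactly the surplus $2^{2m}$ at coefficient $2$ that you flag. All your multiplicities agree with the paper's \eqref{eqn-eqn3} and \eqref{eqn-eqn6}, so nothing further is needed.
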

\begin{proof}
Note that
\begin{equation}\label{eqn-eqn7}
T^{\pr}T^{\pr(-1)}=2\sum_{x \in [TT^{(-1)}]}(0,x)+\sum_{x \in [TT+T^{(-1)}T^{(-1)}]} (1,x),
\end{equation}
where $T=\prod_{j=1}^{2m} J$. It suffices to determine the two multisets $[[TT^{(-1)}]_x \mid x \in \Z_4^{2m}]$ and $[[TT+T^{(-1)}T^{(-1)}]_x \mid x \in \Z_4^{2m}]$.

Note that $|C_{m,u,v}|=\binom{2m}{u}\binom{2m-u}{v}$. By Lemma~\ref{lem-TTdiff}(1), for $0 \le l \le 2m$, we have
$$
|\{x \in \Z_4^{2m} \mid [TT^{-1}]_x=2^{l} \}|=\sum_{u=0}^{2m-l} |C_{m,u,2m-l-u}|=\sum_{u=0}^{2m-l} \binom{2m}{u}\binom{2m-u}{2m-l-u}=2^{2m-l}\binom{2m}{l}.
$$
Therefore, we have
\begin{equation}\label{eqn-eqn3}
[[TT^{(-1)}]_x \mid x \in \Z_4^{2m}]=[ 0\lan 4^{2m}-3^{2m}\ran, 2^l\lan 2^{2m-l}\binom{2m}{l} \ran \mid 0 \le l \le 2m ].
\end{equation}

According to Lemma~\ref{lem-TTdiff}(2)(3), we have
\begin{equation}\label{eqn-eqn4}
[TT]_x=\begin{cases}
  0 & \mbox{if $x \in K_{m,3}$,} \\
  2^u & \mbox{if $x \in \bigcup_{v=0}^{2m-u}D_{m,u,v}$,}
\end{cases}
\end{equation}
and
\begin{equation}\label{eqn-eqn5}
[T^{(-1)}T^{(-1)}]_x=\begin{cases}
  0 & \mbox{if $x \in K_{m,1}$,} \\
  2^u & \mbox{if $x \in \bigcup_{v=0}^{2m-u}D_{m,u,v}^{(-1)}$.}
\end{cases}
\end{equation}
By definition, $D_{m,0,v}=D_{m,0,v}^{(-1)}$ for each $0 \le v \le 2m$ and $D_{m,u,v} \cap D_{m,u,v}^{(-1)}=\es$ for each $1 \le u \le 2m$ and $0 \le v \le 2m-u$. Together with \eqref{eqn-eqn4} and \eqref{eqn-eqn5}, we have
\begin{equation*}
[TT+T^{(-1)}T^{(-1)}]_x=\begin{cases}
  0 & \mbox{if $x \in K_{m,1} \cap K_{m,3}$,} \\
  2 & \mbox{if $x \in (\bigcup_{v=0}^{2m} D_{m,0,v}) \bigcup (\bigcup_{v=0}^{2m-1} (D_{m,1,v} \cup D_{m,1,v}^{(-1)})$),} \\
  2^u & \mbox{if $x \in \bigcup_{v=0}^{2m-u} (D_{m,u,v} \cup D_{m,u,v}^{(-1)})$, $2 \le u \le 2m$.}
\end{cases}
\end{equation*}
Note that $|K_{m,1} \cap K_{m,3}|=4^{2m}-2\cdot3^{2m}+2^{2m}$ and $|D_{m,u,v}|=\binom{2m}{u}\binom{2m-u}{v}$. A direct computation shows
\begin{align}
\begin{aligned}\label{eqn-eqn6}
&[[TT+T^{(-1)}T^{(-1)}]_x \mid x \in \Z_4^{2m}] \\
=&[0\lan 4^{2m}-2\cdot3^{2m}+2^{2m} \ran, 2\lan(2m+1)2^{2m}\ran, 2^l\lan 2^{2m-l+1}\binom{2m}{l}\ran \mid 2 \le l \le 2m].
\end{aligned}
\end{align}
Combining \eqref{eqn-eqn7}, \eqref{eqn-eqn3} and \eqref{eqn-eqn6}, we complete the proof.
\end{proof}

Now we are ready to prove Theorem~\ref{thm-dircon1}.

\begin{proof}[Proof of Theorem~\ref{thm-dircon1}]
Applying Corollary~\ref{cor-lifting} and Propositions~\ref{prop-S0S0invS1S1inv} and~\ref{prop-TTinv}, we derive that $S^{\pr}$ and $T^{\pr}$ form a primitive formally dual pair in $\Z_2 \times \Z_4^{2m}$. The difference spectrum $[[T^{\pr}T^{\pr(-1)}]_g \mid g \in \Z_2 \times \Z_4^{2m}]$ follows from Proposition~\ref{prop-TpTpinv}.
\end{proof}

\section{A recursive construction framework}\label{sec5}

In this section, we propose a recursive construction framework. Roughly speaking, for $i \in \{1,2\}$, assume that $S_i$ and $T_i$ form a primitive formally dual pair in $\Z_2 \times G_i$, which is derived from the lifting construction framework \eqref{eqn-lifting2}. We find a method to combine the two primitive formally dual pairs $S_1,T_1$ and $S_2, T_2$, which leads to a new primitive formally dual pair in $\Z_2 \times G_1 \times G_2$. Thus, this method can be viewed as a recursive construction framework.

For a subset $A$ of a group $G$, we use $\theta(A,G)$ to denote the frequency of $0$ in the difference spectrum of $A$, i.e., in the multiset $[[AA^{(-1)}]_g \mid g \in G]$.

\begin{theorem}\label{thm-recur}
For $i \in \{1,2\}$, let $S_i$ and $T_i$ be a primitive formally dual pair in $G_i$. Partition $S_i$ as $S_i=S_{i0} \cup S_{i1}$. Define
\begin{align*}
S_i^{\pr}&=\{(0,x) \mid x \in S_{i0}\} \cup \{(1,x) \mid x \in S_{i1}\}, \\
T_i^{\pr}&=\{(0,x) \mid x \in T_i\} \cup \{(1,x) \mid x \in T_i^{(-1)}\}.
\end{align*}
For $i \in \{1,2\}$, assume that $S_i^\pr$ and $T_i^\pr$ form a primitive formally dual pair in $\Z_2 \times G_i$. Define two subsets of $\Z_2 \times G_1 \times G_2$ as
\begin{align}
\begin{aligned}\label{eqn-recur}
S^{\pr\pr}&=\{(0,x_1,x_2) \mid (x_1,x_2) \in S_0^{\pr\pr}\} \cup \{(1,x_1,x_2) \mid (x_1,x_2) \in S_1^{\pr\pr}\}, \\
T^{\pr\pr}&=\{(0,x_1,x_2) \mid (x_1,x_2) \in T_1 \times T_2\} \cup \{(1,x_1,x_2) \mid (x_1,x_2) \in T_1^{(-1)} \times T_2^{(-1)}\},
\end{aligned}
\end{align}
where
\begin{align*}
S_0^{\pr\pr}&=(S_{10} \times S_{20}) \cup (S_{11} \times S_{21}), \\
S_1^{\pr\pr}&=(S_{10} \times S_{21}) \cup (S_{11} \times S_{20}).
\end{align*}
For $i \in \{1,2\}$, let $\{\chi_{i,z_i} \mid z_i \in G_i\}$ be the set of all characters on $G_i$. Then $S^{\pr\pr}$ and $T^{\pr\pr}$ form a primitive formally dual pair in $\Z_2 \times G_1 \times G_2$ if and only if one of the following holds:
\begin{itemize}
\item[(1)] For each $\chi_{1,z_1} \in \wh{G_1}$, we have $\chi_{1,z_1}^2(T_1) \in \R$.
\item[(2)] For each $\chi_{2,z_2} \in \wh{G_2}$, we have $\chi_{2,z_2}^2(T_2) \in \R$.
\end{itemize}
Moreover, we have
$$
\theta(T^{\pr\pr},\Z_2 \times G_1 \times G_2)=2\cdot |G_1|\cdot |G_2|-|\{T_1T_1^{(-1)}\}|\cdot |\{T_2T_2^{(-1)}\}|-|\{T_1T_1+T_1^{(-1)}T_1^{(-1)}\}|\cdot |\{T_2T_2\}|.
$$

\end{theorem}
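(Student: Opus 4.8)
The plan is to recognize the pair $(S^{\pr\pr},T^{\pr\pr})$ as an instance of the lifting construction framework \eqref{eqn-lifting2} and then apply Corollary~\ref{cor-lifting}. Taking $G=G_1\times G_2$, $T=T_1\times T_2$, one checks that $S_0^{\pr\pr}\cup S_1^{\pr\pr}=S_1\times S_2$ is a genuine partition, so the underlying base pair is $S=S_1\times S_2$ and $T=T_1\times T_2$; by Proposition~\ref{prop-prod} this is a primitive formally dual pair in $G_1\times G_2$ with $|S|=|T|$, and $T^{\pr\pr}$ has exactly the shape of \eqref{eqn-lifting2}. Hence, writing $\chi_z=\chi_{1,z_1}\chi_{2,z_2}$ for $z=(z_1,z_2)$, Corollary~\ref{cor-lifting} tells us that $(S^{\pr\pr},T^{\pr\pr})$ is a primitive formally dual pair if and only if $|\chi_z(T+T^{(-1)})|^2=\tfrac{4|T|^2}{|S|}(\nu_{S_0^{\pr\pr}}(z)+\nu_{S_1^{\pr\pr}}(z))$ for every $z$, with primitivity automatic.

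First I would evaluate the left side. Setting $a=\chi_{1,z_1}(T_1)$ and $b=\chi_{2,z_2}(T_2)$, multiplicativity of $\chi_z$ over the direct product together with $\chi(T^{(-1)})=\overline{\chi(T)}$ gives $\chi_z(T+T^{(-1)})=ab+\overline{ab}$, so the left side equals $4(\Re(ab))^2$. For the right side the key algebraic step is the group-ring identity
$$S_0^{\pr\pr}(S_0^{\pr\pr})^{(-1)}+S_1^{\pr\pr}(S_1^{\pr\pr})^{(-1)}=(S_{10}S_{10}^{(-1)}+S_{11}S_{11}^{(-1)})(S_{20}S_{20}^{(-1)}+S_{21}S_{21}^{(-1)})+(S_{10}S_{11}^{(-1)}+S_{11}S_{10}^{(-1)})(S_{20}S_{21}^{(-1)}+S_{21}S_{20}^{(-1)}),$$
obtained by expanding the eight products and regrouping. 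Taking the coefficient of $z$ factors each summand across $G_1$ and $G_2$, so $\nu_{S_0^{\pr\pr}}(z)+\nu_{S_1^{\pr\pr}}(z)$ becomes a sum of two products of factor weight enumerators. Now I invoke the hypothesis that each $(S_i^{\pr},T_i^{\pr})$ is formally dual: by Theorem~\ref{thm-lifting} (with $T_{i0}=T_i$, $T_{i1}=T_i^{(-1)}$) this reads $\nu_{S_{i0}}(z_i)+\nu_{S_{i1}}(z_i)=\tfrac{|S_i|}{|T_i|^2}(\Re\chi_{i,z_i}(T_i))^2$ and $\nu_{S_{i0},S_{i1}}(z_i)+\nu_{S_{i1},S_{i0}}(z_i)=\tfrac{|S_i|}{|T_i|^2}(\Im\chi_{i,z_i}(T_i))^2$. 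Substituting and simplifying collapses the right side to $4\bigl((\Re a)^2(\Re b)^2+(\Im a)^2(\Im b)^2\bigr)$.

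Comparing both sides and expanding $(\Re(ab))^2=(\Re a\,\Re b-\Im a\,\Im b)^2$, the required equality reduces, for every $(z_1,z_2)$, to the vanishing of the cross term $\Re a\,\Im a\,\Re b\,\Im b=\tfrac14\Im(a^2)\Im(b^2)=0$. This product vanishes for all pairs if and only if $(\chi_{1,z_1}(T_1))^2\in\R$ for all $z_1$ or $(\chi_{2,z_2}(T_2))^2\in\R$ for all $z_2$, which is precisely conditions (1) and (2). For the difference-spectrum count I would expand $T^{\pr\pr}T^{\pr\pr(-1)}$ in $\Z[\Z_2\times G_1\times G_2]$ into its $\{0\}$-layer $2\,T_1T_1^{(-1)}\cdot T_2T_2^{(-1)}$ and its $\{1\}$-layer $T_1T_1\cdot T_2T_2+T_1^{(-1)}T_1^{(-1)}\cdot T_2^{(-1)}T_2^{(-1)}$. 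The zeros in the first layer number $|G_1||G_2|-|\{T_1T_1^{(-1)}\}|\,|\{T_2T_2^{(-1)}\}|$; for the second layer I would use that condition (2) is equivalent, via $\chi(T_2T_2)=\chi(T_2)^2$ and Fourier inversion (Proposition~\ref{prop-fourier}), to the group-ring identity $T_2T_2=T_2^{(-1)}T_2^{(-1)}$, which factors the $\{1\}$-layer as $(T_1T_1+T_1^{(-1)}T_1^{(-1)})\cdot T_2T_2$ with support size $|\{T_1T_1+T_1^{(-1)}T_1^{(-1)}\}|\,|\{T_2T_2\}|$. Adding the two layer counts gives the stated value of $\theta$.

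The main obstacle is twofold. First, the bookkeeping in the eight-term expansion and its clean factorization across $G_1$ and $G_2$ must be handled carefully, since it is exactly this factorization that lets the two independent lifting conditions enter and makes the cross term emerge as a single product $\Im(a^2)\Im(b^2)$. The subtler point is the $\theta$ formula: the neat factorization of the $\{1\}$-layer as a product over $G_1$ times $\{T_2T_2\}$ over $G_2$ genuinely relies on the identity $T_2T_2=T_2^{(-1)}T_2^{(-1)}$, that is, on condition (2) rather than merely on $(S^{\pr\pr},T^{\pr\pr})$ being formally dual; accordingly the count is read under the labeling for which condition (2) holds, the roles of $G_1$ and $G_2$ being interchangeable up to relabeling in the stated formula.
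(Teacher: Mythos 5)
Your proposal is correct and follows essentially the same route as the paper's proof: it recognizes \eqref{eqn-recur} as a lifting (via \eqref{eqn-lifting2}) of the product pair $S_1\times S_2$, $T_1\times T_2$ (Proposition~\ref{prop-prod}), applies Corollary~\ref{cor-lifting}, factors the weight enumerators across $G_1\times G_2$ through the same eight-term group-ring expansion, invokes \eqref{eqn-liftingfull} for each factor pair, and reduces the required identity to the vanishing of $\Im\bigl(\chi_{1,z_1}^2(T_1)\bigr)\,\Im\bigl(\chi_{2,z_2}^2(T_2)\bigr)$ for all $(z_1,z_2)$, which is the paper's condition $(\chi_{1,z_1}^2(T_1)-\overline{\chi_{1,z_1}^2(T_1)})(\chi_{2,z_2}^2(T_2)-\overline{\chi_{2,z_2}^2(T_2)})=0$ written in real--imaginary form. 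Your one genuine addition is making explicit, via Proposition~\ref{prop-fourier}, that the factorization of the $\{1\}$-layer as $(T_1T_1+T_1^{(-1)}T_1^{(-1)})\times T_2T_2$ in the $\theta$ computation requires $T_2T_2=T_2^{(-1)}T_2^{(-1)}$, i.e.\ condition (2) under a suitable labeling of $G_1,G_2$ --- a step the paper's proof uses tacitly and which indeed holds in all of its applications.
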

\begin{proof}
By Proposition~\ref{prop-prod}, $S_1 \times S_2$ and $T_1 \times T_2$ form a primitive formally dual pair in $G_1 \times G_2$. Moreover, since $S_{i0}$ and $S_{i1}$ form a partition of $S_i$, then by definition, $S_0^{\pr\pr}$ and $S_1^{\pr\pr}$ form a partition of $S_1 \times S_2$. Thus, the construction in \eqref{eqn-recur} fits into the lifting construction framework \eqref{eqn-lifting2}. We use $\psi_{z_1,z_2}$ to denote a character on $G_1 \times G_2$, such that for $g_i \in G_i$, we have $\psi_{z_1,z_2}(g_1,g_2)=\chi_{1,z_1}(g_1)\chi_{2,z_2}(g_2)$. By Corollary~\ref{cor-lifting}, $S^{\pr\pr}$ and $T^{\pr\pr}$ form a primitive formally dual pair in $\Z_2 \times G_1 \times G_2$ if and only if
\begin{equation}\label{eqn-eqn16}
|\psi_{z_1,z_2}(T_1 \times T_2+T_1^{(-1)}\times T_2^{(-1)})|^2=\frac{4|T_1|^2|T_2|^2}{|S_1||S_2|}(\nu_{S_0^{\pr\pr}}(z)+\nu_{S_1^{\pr\pr}}(z)), \quad \mbox{for each $z=(z_1,z_2) \in G_1 \times G_2$.}
\end{equation}
In the following, we denote the right hand size of \eqref{eqn-eqn16} as $RS$ and the left hand size as $LS$.  A direct computation shows that for $z=(z_1,z_2) \in G_1 \times G_2$,
\begin{align*}
\nu_{S_0^{\pr\pr}}(z)&=\nu_{S_{10}}(z_1)\nu_{S_{20}}(z_2)+\nu_{S_{11}}(z_1)\nu_{S_{21}}(z_2)+\nu_{S_{10},S_{11}}(z_1)\nu_{S_{20},S_{21}}(z_2)+\nu_{S_{11},S_{10}}(z_1)\nu_{S_{21},S_{20}}(z_2),\\
\nu_{S_1^{\pr\pr}}(z)&=\nu_{S_{10}}(z_1)\nu_{S_{21}}(z_2)+\nu_{S_{11}}(z_1)\nu_{S_{20}}(z_2)+\nu_{S_{10},S_{11}}(z_1)\nu_{S_{21},S_{20}}(z_2)+\nu_{S_{11},S_{10}}(z_1)\nu_{S_{20},S_{21}}(z_2).
\end{align*}
Consequently,
\begin{align*}
RS=\frac{4|T_1|^2|T_2|^2}{|S_1||S_2|}\big((&\nu_{S_{10}}(z_1)+\nu_{S_{11}}(z_1))(\nu_{S_{20}}(z_2)+\nu_{S_{21}}(z_2))\\
                                           &+(\nu_{S_{10},S_{11}}(z_1)+\nu_{S_{11},S_{10}}(z_1))(\nu_{S_{20},S_{21}}(z_2)+\nu_{S_{21},S_{20}}(z_2))\big).
\end{align*}
Since for $i \in \{1,2\}$, the sets $S_i^\pr$ and $T_i^\pr$ form a primitive formally dual pair in $\Z_2 \times G_i$, we have by \eqref{eqn-liftingfull} that
$$
RS=\frac{1}{4}\big( |\chi_{1,z_1}(T_1+T_1^{(-1)})|^2|\chi_{2,z_2}(T_2+T_2^{(-1)})|^2+|\chi_{1,z_1}(T_1-T_1^{(-1)})|^2|\chi_{2,z_2}(T_2-T_2^{(-1)})|^2 \big).
$$
A direct computation shows that
\begin{align*}
RS=\frac{1}{2}\big(&\chi_{1,z_1}^2(T_1)\chi_{2,z_2}^2(T_2)+\ol{\chi_{1,z_1}^2(T_1)}\ol{\chi_{2,z_2}^2(T_2)}+\chi_{1,z_1}^2(T_1)\ol{\chi_{2,z_2}^2(T_2)}+\ol{\chi_{1,z_1}^2(T_1)}\chi_{2,z_2}^2(T_2) \big)\\
                   &+2|\chi_{1,z_1}(T_1)|^2|\chi_{2,z_2}(T_2)|^2.
\end{align*}
Meanwhile, we have
\begin{align*}
LS&=|\chi_{1,z_1}(T_1)\chi_{2,z_2}(T_2)+\ol{\chi_{1,z_1}(T_1)}\ol{\chi_{2,z_2}(T_2)}|^2 \\
  &=\chi_{1,z_1}^2(T_1)\chi_{2,z_2}^2(T_2)+\ol{\chi_{1,z_1}^2(T_1)}\ol{\chi_{2,z_2}^2(T_2)}+2|\chi_{1,z_1}(T_1)|^2|\chi_{2,z_2}(T_2)|^2.
\end{align*}
Comparing $LS$ and $RS$, we can see that \eqref{eqn-eqn16} holds if and only if
$$
(\chi_{1,z_1}^2(T_1)-\ol{\chi_{1,z_1}^2(T_1)})(\chi_{2,z_2}^2(T_2)-\ol{\chi_{2,z_2}^2(T_2)})=0, \mbox{for each $(z_1,z_2) \in G_1 \times G_2$}.
$$
This amounts to that $\chi_{1,z_1}^2(T_1) \in \R$ for each $\chi_{1,z_1} \in \wh{G_1}$, or $\chi_{2,z_2}^2(T_2) \in \R$ for each $\chi_{2,z_2} \in \wh{G_2}$.

Finally, note that
\begin{align*}
T^{\pr\pr}T^{\pr\pr(-1)}=&2\sum_{(x_1,x_2)\in [(T_1\times T_2)(T_1\times T_2)^{(-1)}]}(0,x_1,x_2)\\
                   &+\sum_{(x_1,x_2)\in [(T_1\times T_2)(T_1\times T_2)+(T_1\times T_2)^{(-1)}(T_1\times T_2)^{(-1)}]}(1,x_1,x_2) \\
=&2\sum_{(x_1,x_2)\in [T_1T_1^{(-1)}\times T_2T_2^{(-1)}]}(0,x_1,x_2)+\sum_{(x_1,x_2)\in [(T_1T_1+T_1^{(-1)}T_1^{(-1)})\times T_2T_2]}(1,x_1,x_2).
\end{align*}
The equation of $\theta(T^{\pr\pr},\Z_2 \times G_1 \times G_2)$ follows immediately.
\end{proof}

\section{Inequivalent primitive formally dual pairs in $\Z_2 \times \Z_4^{2m}$}\label{sec6}

In this section, we will employ the recursive construction framework \eqref{eqn-recur} to generate the second infinite family of primitive formally dual pairs in $\Z_2 \times \Z_4^{2m}$. Moreover, using the recursive construction framework \eqref{eqn-recur}, we can combine the first infinite family in Theorem~\ref{thm-dircon1} and the second one, which leads to more inequivalent formally dual pairs in $\Z_2 \times \Z_4^{2m}$.

The second infinite family is motivated by the following example.

\begin{example}\label{exam-Teich}
In the group $\Z_2\times\Z_4^2$, define two subsets
$$
S^\pr=\{(0,0,0), (0,0,1),(0,1,0), (1,3,3)\}
$$
and
$$
T^\pr=\{ (0,0,0), (0,0,1), (0,1,0), (0,3,3), (1,0,0), (1,0,3), (1,3,0), (1,1,1) \}.
$$
Then $S^\pr$ and $T^\pr$ form a primitive formally dual pair in $\Z_2\times\Z_4^2$. It can be easily verified that $\theta(T^{\pr},\Z_2\times\Z_4^2)=9$. Define
$$
S=T=\{ (0,0), (0,1), (1,0), (3,3) \},
$$
then $S$ and $T$ form a primitive formally dual pair in $\Z_4^2$ (see \cite[Theorem 3.7]{LPS}). Therefore, this example fits into the lifting construction framework \eqref{eqn-lifting2}. We remark that this example is equivalent to Example~\ref{exam-TITO}.
\end{example}

Next, we are going to show that Example~\ref{exam-Teich} is a member of an infinite family. In order to describe our construction, we need more notation. Define
\begin{align*}
L&=\{(0,0),(0,1),(1,0),(3,3)\} \subset \Z_4^2 \\
L_1&=\{(0,0),(0,1),(1,0)\} \subset \Z_4^2 \\
L_2&=\{(3,3)\} \subset \Z_4^2
\end{align*}
where $L_1$ and $L_2$ form a partition of $L$. For $0 \le i \le m$, define a subset $E_{m,i}$ of $\Z_4^{2m}$ as
$$
E_{m,i}=\sum_{\substack{|\{1 \le j \le m \mid N_j=L_1\}|=i \\ |\{1 \le j \le m \mid N_j=L_2\}|=m-i}} \prod_{j=1}^m N_j.
$$
From the viewpoint of the lifting construction framework \eqref{eqn-lifting2}, we identify the following pattern in Example~\ref{exam-Teich}:
\begin{itemize}
\item[(1)] $S=L$ and $T=L$ form the initial primitive formally dual pair in $\Z_4^2$.
\item[(2)] $S_0=E_{1,1}$ and $S_1=E_{1,0}$ form a partition of $S$.
\end{itemize}
By extending this pattern, we obtain the following construction.

\begin{theorem}\label{thm-dircon2}
Let $S=T=\prod_{j=1}^m L$. Define
\begin{equation*}
S_0=\begin{cases}
  \sum_{i=0}^{\frac{m-1}{2}} E_{m,2i+1} & \mbox{if $m$ is odd,} \\
  \sum_{i=0}^{\frac{m}{2}} E_{m,2i} & \mbox{if $m$ is even,}
  \end{cases}
\end{equation*}
and
\begin{equation*}
S_1=\begin{cases}
  \sum_{i=0}^{\frac{m-1}{2}} E_{m,2i} & \mbox{if $m$ is odd,} \\
  \sum_{i=0}^{\frac{m}{2}-1} E_{m,2i+1} & \mbox{if $m$ is even,}
  \end{cases}
\end{equation*}
which form a partition of $S$. Let
\begin{equation}
\begin{aligned}\label{eqn-Tpdircon2}
S^{\pr}&=\{(0,x) \mid x \in S_0\} \cup \{(1,x) \mid x \in S_1\}, \\
T^{\pr}&=\{(0,x) \mid x \in T\} \cup \{(1,x) \mid x \in T^{(-1)}\}.
\end{aligned}
\end{equation}
Then $S^{\pr}$ and $T^{\pr}$ form a primitive formally dual pair in $\Z_2 \times \Z_4^{2m}$. Moreover, we have
$$
\theta(T^\pr,\Z_2 \times \Z_4^{2m})=2^{4m+1}-13^m-10^m.
$$
\end{theorem}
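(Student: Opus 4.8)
The plan is to realize $S^\pr$ and $T^\pr$ by iterating the recursive framework of Theorem~\ref{thm-recur}, taking Example~\ref{exam-Teich} as the base case and inducting on $m$. First observe that $L$ and $L$ form a primitive formally dual pair in $\Z_4^2$, so by Proposition~\ref{prop-prod} the sets $S=T=\prod_{j=1}^m L$ form a primitive formally dual pair in $\Z_4^{2m}$, and hence the construction \eqref{eqn-Tpdircon2} indeed fits the lifting framework \eqref{eqn-lifting2}. For $m=1$ the pair $S^\pr,T^\pr$ is exactly the primitive formally dual pair of Example~\ref{exam-Teich}, with $S_0=E_{1,1}=L_1$ and $S_1=E_{1,0}=L_2$. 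For the inductive step I would take the pair in $\Z_2\times\Z_4^{2(m-1)}$ already produced for $m-1$ (with $G_1=\Z_4^{2(m-1)}$, $T_1=\prod_{j=1}^{m-1}L$) together with the base pair in $\Z_2\times\Z_4^2$ (with $G_2=\Z_4^2$, $T_2=L$), and combine them through Theorem~\ref{thm-recur} to land in $\Z_2\times\Z_4^{2(m-1)}\times\Z_4^2=\Z_2\times\Z_4^{2m}$.

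To apply Theorem~\ref{thm-recur} I must verify its hypothesis, namely that $\chi_{2,z_2}^2(T_2)\in\R$ for every $\chi_{2,z_2}\in\wh{\Z_4^2}$, where $T_2=L$. This is the real engine of the proof. Using $\chi_w(L)=1+(\sqm)^{w_1}+(\sqm)^{w_2}+(\sqm)^{-(w_1+w_2)}$ for $w=(w_1,w_2)\in\Z_4^2$, a short case analysis over the $16$ values of $w$ shows that $\chi_w(L)$ is always either real or purely imaginary, so $\chi_w(L)^2\in\R$ for every $w$. Granting this, Theorem~\ref{thm-recur} yields a primitive formally dual pair in $\Z_2\times\Z_4^{2m}$, so no separate primitivity argument is needed. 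It then remains to check that the combined sets coincide with \eqref{eqn-Tpdircon2}: the $T$-part is immediate since $T_1^{(-1)}\times T_2^{(-1)}=(\prod_{j=1}^m L)^{(-1)}$, while for the $S$-part one checks that the parity-combining rule $S_0^{\pr\pr}=(S_{10}\times S_{20})\cup(S_{11}\times S_{21})$, $S_1^{\pr\pr}=(S_{10}\times S_{21})\cup(S_{11}\times S_{20})$ places an element of $\prod_{j=1}^m L$ into $S_0$ exactly when its number of $L_2$-blocks is even. Since $E_{m,i}$ consists of the elements with exactly $m-i$ such blocks, ``even number of $L_2$-blocks'' is precisely $i\equiv m\pmod 2$, which is how $S_0$ and $S_1$ are defined in the statement for both parities of $m$.

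For the difference spectrum count I would argue directly rather than recursively. From $T^\pr=\{0\}\times T\cup\{1\}\times T^{(-1)}$ one gets $T^\pr T^{\pr(-1)}=2\sum_{x\in[TT^{(-1)}]}(0,x)+\sum_{x\in[TT+T^{(-1)}T^{(-1)}]}(1,x)$, so that $\theta(T^\pr,\Z_2\times\Z_4^{2m})=2^{4m+1}-|\{TT^{(-1)}\}|-|\{TT+T^{(-1)}T^{(-1)}\}|$, using $|\Z_2\times\Z_4^{2m}|=2^{4m+1}$. Because $T=\prod_{j=1}^m L$, all three relevant supports factorize over the $m$ blocks: $|\{TT^{(-1)}\}|=|\{LL^{(-1)}\}|^m$, while $TT$ and $T^{(-1)}T^{(-1)}$ have nonnegative coefficients, so inclusion--exclusion over product supports gives $|\{TT+T^{(-1)}T^{(-1)}\}|=2|\{LL\}|^m-|\{LL\}\cap\{L^{(-1)}L^{(-1)}\}|^m$. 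A block-level computation yields $|\{LL^{(-1)}\}|=13$ and $|\{LL\}|=10$, and one checks that $\{LL\}$ is symmetric under negation, whence $\{LL\}\cap\{L^{(-1)}L^{(-1)}\}=\{LL\}$ also has size $10$. Substituting gives $|\{TT^{(-1)}\}|+|\{TT+T^{(-1)}T^{(-1)}\}|=13^m+(2\cdot 10^m-10^m)=13^m+10^m$, hence $\theta(T^\pr,\Z_2\times\Z_4^{2m})=2^{4m+1}-13^m-10^m$.

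The main obstacle is the reality claim $\chi_w(L)^2\in\R$ for all $w$; the entire formal-duality half of the theorem rests on it, and it is precisely the hypothesis that keeps Theorem~\ref{thm-recur} applicable at every stage of the induction (which is legitimate because one of the two combined factors is always the single block $T_2=L$). The remaining work is bookkeeping: confirming that the iterated parity-partition of Theorem~\ref{thm-recur} reproduces the $E_{m,i}$ description of $S_0,S_1$, and noting in the spectrum count that the symmetry $\{LL\}=-\{LL\}$ collapses $2\cdot 10^m-10^m$ to $10^m$. As an alternative to the induction, the same formal-duality conclusion can be reached in one shot from Corollary~\ref{cor-lifting}: writing $S_0-S_1=\prod_{j=1}^m(L_1-L_2)$ and expanding $|\chi_z(T+T^{(-1)})|^2=\chi_z(T)^2+\overline{\chi_z(T)}^2+2|\chi_z(T)|^2$, the condition collapses to the single-block identity $\chi_w(L)^2=4\,[(L_1-L_2)(L_1-L_2)^{(-1)}]_w$, again relying on $\chi_w(L)^2\in\R$.
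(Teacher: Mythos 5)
Your proposal is correct and follows essentially the same route as the paper: induction on $m$ with Example~\ref{exam-Teich} as base case, combining the pair for $m-1$ with the single block $L$ via Theorem~\ref{thm-recur} after verifying $\chi^2(L)\in\R$ for all characters of $\Z_4^2$, and checking that the parity-combining rule reproduces the $E_{m,i}$-partition (your unified ``even number of $L_2$-blocks'' bookkeeping is just a cleaner version of the paper's separate odd/even-$k$ cases). Your only deviation is computing $\theta(T^\pr,\Z_2\times\Z_4^{2m})$ directly from the factorization $|\{TT^{(-1)}\}|=13^m$ and $TT=T^{(-1)}T^{(-1)}$ (so the second support has size $10^m$) rather than through the recursive $\theta$-formula of Theorem~\ref{thm-recur}, but this rests on exactly the block-level facts the paper establishes in Lemma~\ref{lem-TTdiff2}, so the arguments are equivalent.
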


\begin{remark}\label{rem-dircon2}
\quad
\begin{itemize}
\item[(1)] The construction of Theorem~\ref{thm-dircon2} fits into the lifting construction framework \eqref{eqn-lifting2}. Note that in Theorem~\ref{thm-dircon2}, each of $S$, $S_0$ and $S_1$ is a union of basic blocks of the form $\prod_{j=1}^m N_j$, where $N_j=L_1$ or $N_j=L_2$ for each $1 \le j \le m$. We can see that $S$ is partitioned into $S_0$ and $S_1$, depending on the number of $L_1$ and $L_2$ contained in the basic blocks.
\item[(2)] Recall that $\Z_4^n$ is the additive group of the \emph{Galois ring} $\GR(4,n)$ (see \cite[Chapter 14]{Wan} for a detailed treatment of Galois rings). We remark that $L$ is the Teichmuller set in the Galois ring $\GR(4,2)$, which is used in Theorem~\ref{thm-dircon2}. It is natural to ask if we can use Teichmuller sets in the Galois rings $\GR(4,n)$, with $n \ne 2$, to construct new primitive formally dual pairs. If $n=1$, noting that $J$ in Theorem~\ref{thm-dircon1} is the Teichmuller set in $\GR(4,1)$, we refer to Theorem~\ref{thm-dircon1} and Remark~\ref{rem-dircon1}(2). When $n>2$ is odd, in Theorem~\ref{thm-dircon2}, we set $L$ to be the Teichmuller set in $\GR(4,n)$ and $m=1$. Then, the resulting pair $S^\pr$ and $T^\pr$ cannot be a primitive formally dual pair. Indeed, suppose $S$ and $T$ are the Teichmuller sets of $\GR(4,n)$. Let $S^\pr$ be an arbitrary subset of size $4^n$ and $T^{\pr}$ be the same as \eqref{eqn-Tpdircon2}. Let $z=(1,1,\ldots,1) \in \Z_4^n$. Then, $|\phi_{0,z}(T^{\pr})|^2=2^{n+1}$. By \eqref{eqn-def2}, we have $\nu_{S^{\pr}}((0,z))=\frac{1}{2}$, which is impossible. Finally, when $n=4$, in Theorem~\ref{thm-dircon2}, we set $L$ to be the Teichmuller set in $\GR(4,4)$ and $m=1$. Then with the assistance of computer, we can show that $S^{\pr}$ and $T^{\pr}$ do not form a primitive formally dual pair.
\end{itemize}
\end{remark}

To prove Theorem~\ref{thm-dircon2}, we need more notation. Define three subsets of $\Z_4^2$ as follows:
\begin{align*}
Z&=\{(0,2),(2,0),(2,2)\}, \\
Y&=\{(0,0),(0,2),(2,0),(2,2)\}, \\
I&=\{(0,1),(0,3),(1,0),(3,0),(1,1),(3,3)\}.
\end{align*}
Furthermore, define
\begin{align*}
M_m&=\{(z_1,z_2,\ldots,z_m) \in \Z_4^{2m} \mid \mbox{each $z_i \in \Z_4^2$ and there exists $z_j \in Z$} \}, \\
O_m&=\{(z_1,z_2,\ldots,z_m) \in \Z_4^{2m} \mid \mbox{each $z_i \in \Z_4^2$, there exsits $z_j \in \Z_4^2 \sm (Y \cup I)$} \}.
\end{align*}

We have the following preparatory lemma.

\begin{lemma}\label{lem-TTdiff2}
Let $T=\prod_{j=1}^m L$.
\begin{itemize}
\item[(1)] For $x=(x_1,x_2,\ldots,x_m) \in \Z_4^{2m}$, with $x_i \in \Z_4^2$,
$$
[TT^{(-1)}]_x=\begin{cases}
  0 & \mbox{if $x \in M_{m}$,} \\
  4^{l} & \mbox{if $x \notin M_{m}$ and $|\{1 \le i \le m \mid x_i \in \{(0,0)\}\}|=l$.}
\end{cases}
$$
In particular, $|\{TT^{(-1)}\}|=13^m$.
\item[(2)] For $x=(x_1,x_2,\ldots,x_m) \in \Z_4^{2m}$, with $x_i \in \Z_4^2$,
$$
[TT]_x=[T^{(-1)}T^{(-1)}]_x=\begin{cases}
  0 & \mbox{if $x \in O_{m}$,} \\
  2^{l} & \mbox{if $x \notin O_{m}$ and $|\{1 \le i \le m \mid x_i \in I\}|=l$.}
\end{cases}
$$
In particular, $|\{TT\}|=|\{T^{(-1)}T^{(-1)}\}|=|\{TT+T^{(-1)}T^{(-1)}\}|=10^m$.
\end{itemize}
\end{lemma}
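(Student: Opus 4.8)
The plan is to exploit the product structure $T=\prod_{j=1}^{m}L$. Writing $x=(x_1,\ldots,x_m)\in\Z_4^{2m}$ with each $x_i\in\Z_4^2$, the group ring $\Z[\Z_4^{2m}]$ is the tensor product of $m$ copies of $\Z[\Z_4^2]$, one per coordinate block, and under this identification the group ring element $T$ is the product of the $m$ block copies of $L$. Consequently every product we need factors blockwise: $[TT^{(-1)}]_x=\prod_{j=1}^{m}[LL^{(-1)}]_{x_j}$, $[TT]_x=\prod_{j=1}^{m}[LL]_{x_j}$, and likewise for $T^{(-1)}T^{(-1)}$. Thus the whole lemma reduces to computing the two elements $LL^{(-1)}$ and $LL$ in the single $16$-element group $\Z_4^2$, after which the stated formulas drop out of the blockwise product.

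For part (1), I would compute $LL^{(-1)}$ directly by enumerating the $16$ ordered differences $a-b$ with $a,b\in L$. The outcome is that $LL^{(-1)}$ has coefficient $4$ at the identity $(0,0)$, coefficient $0$ precisely on the three-element set $Z=\{(0,2),(2,0),(2,2)\}$, and coefficient $1$ on each of the remaining twelve elements. Plugging this into $[TT^{(-1)}]_x=\prod_j[LL^{(-1)}]_{x_j}$, the product vanishes exactly when some block $x_j$ lies in $Z$, that is, when $x\in M_m$; otherwise each block contributes a factor $4$ (if $x_j=(0,0)$) or $1$ (otherwise), giving $4^{l}$ with $l=|\{\,i:x_i=(0,0)\,\}|$. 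Since $\{LL^{(-1)}\}$ has $13$ elements, $\{TT^{(-1)}\}$ has $13^m$ elements.

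For part (2), the same method applies to $LL$: enumerating the $16$ ordered sums $a+b$ with $a,b\in L$ shows that $LL$ has coefficient $1$ on $Y=\{(0,0),(0,2),(2,0),(2,2)\}$, coefficient $2$ on $I=\{(0,1),(0,3),(1,0),(3,0),(1,1),(3,3)\}$, and coefficient $0$ off $Y\cup I$. The blockwise product then gives $[TT]_x=0$ exactly when some block lies outside $Y\cup I$, i.e.\ $x\in O_m$, and otherwise $2^{l}$ with $l=|\{\,i:x_i\in I\,\}|$. For the identity $[T^{(-1)}T^{(-1)}]_x=[TT]_x$, I would note $L^{(-1)}L^{(-1)}=(LL)^{(-1)}$ and observe that $Y$ and $I$ are each invariant under negation while $LL$ is constant on each; hence $LL$ is fixed by the inversion map, so $[T^{(-1)}T^{(-1)}]_x=[TT]_{-x}=[TT]_x$. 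Finally $\{LL\}=Y\cup I$ has $10$ elements, whence $\{TT\}$ and $\{T^{(-1)}T^{(-1)}\}$ both have $10^m$ elements; since these two elements share the same support, so does their sum, giving $|\{TT+T^{(-1)}T^{(-1)}\}|=10^m$.

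The conceptual heart is the blockwise factorization, which is the only structural step; everything else is a finite computation in $\Z_4^2$. Accordingly the main risk is clerical rather than conceptual: one must carry out the sixteen additions and sixteen differences in $\Z_4^2$ without slips and correctly verify that the zero pattern of $LL^{(-1)}$ is exactly $Z$ and that of $LL$ is exactly $\Z_4^2\setminus(Y\cup I)$. Beyond this bookkeeping I expect no genuine obstacle.
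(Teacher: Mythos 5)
Your proposal is correct and follows essentially the same route as the paper: both reduce the lemma to the single-block computations $LL^{(-1)}$ and $LL=L^{(-1)}L^{(-1)}=Y+2I$ in $\Z_4^2$ and then use the product structure $T=\prod_{j=1}^m L$ to multiply block coefficients (the paper phrases this as counting the ways to express each $x_i$ as a sum of elements of $L$, which is exactly your blockwise factorization). The only cosmetic differences are that you treat part (1) explicitly where the paper says ``similar,'' and you deduce $L^{(-1)}L^{(-1)}=LL$ from negation-invariance of $Y$ and $I$ rather than by direct enumeration.
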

\begin{proof}
We only prove (2), since the proof of (1) is similar. We have $TT=\prod_{i=1}^m LL$ and $T^{(-1)}T^{(-1)}=\prod_{i=1}^m L^{(-1)}L^{(-1)}$. Note that
\begin{equation}\label{eqn-eqn22}
LL=L^{(-1)}L^{(-1)}=Y+2I.
\end{equation}
Therefore, $TT=T^{(-1)}T^{(-1)}$ and $TT+T^{(-1)}T^{(-1)}=2TT$, which implies $|\{TT\}|=|\{T^{(-1)}T^{(-1)}\}|=|\{TT+T^{(-1)}T^{(-1)}\}|$. If $x \in O_m$, we have $[TT]_x=0$ . If $x \notin O_m$, then $x_i \in Y \cup I$ for each $1 \le i \le m$. By~\eqref{eqn-eqn22}, if $x_i \in Y$, then there is a unique way to express $x_i$ as a sum of elements from $L$. Similarly, if $x_i \in I$, then there are two ways to express $x_i$ as a sum of elements from $L$. Suppose $|\{1 \le i \le m \mid x_i \in I\}|=l$. Then, we have $[TT]_x=2^l$. Finally, $|\{TT\}|=4^{2m}-|O_m|=10^m$.
\end{proof}

Now we proceed to prove Theorem~\ref{thm-dircon2}.

\begin{proof}[Proof of Theorem~\ref{thm-dircon2}]
The proof is by induction. If $m=1$, then the conclusion of Theorem~\ref{thm-dircon2} follows from Example~\ref{exam-Teich}. The induction assumption is that the conclusion of Theorem~\ref{thm-dircon2} holds for $m=k$, and we are going to prove that the conclusion is true for $m=k+1$.

First, assume that $k$ is odd. Let $S_1=T_1=\prod_{j=1}^k L$. By Proposition~\ref{prop-prod}, $S_1$ and $T_1$ form a primitive formally dual pair in $\Z_4^{2k}$. Let $S_{10}$ and $S_{11}$ form a partition of $S_1$, where
$$
S_{10}=\sum_{i=0}^{\frac{k-1}{2}} E_{k,2i+1}, \quad S_{11}=\sum_{i=0}^{\frac{k-1}{2}} E_{k,2i}.
$$
Define two subsets of $\Z_2 \times \Z_4^{2k}$ as
\begin{align*}
S_1^{\pr}&=\{(0,x) \mid x \in S_{10}\} \cup \{(1,x) \mid x \in S_{11}\}, \\
T_1^{\pr}&=\{(0,x) \mid x \in T_1\} \cup \{(1,x) \mid x \in T_1^{(-1)}\}.
\end{align*}
By the induction assumption, we know that $S_1^{\pr}$ and $T_1^{\pr}$ form a primitive formally dual pair in $\Z_2 \times \Z_4^{2k}$. Let $S_2=T_2=L$, which form a primitive formally dual pair in $\Z_4^{2}$. Let $S_{20}=L_1$ and $S_{21}=L_2$ form a partition of $S_2$. Define
\begin{align*}
S_2^{\pr}&=\{(0,x) \mid x \in S_{20}\} \cup \{(1,x) \mid x \in S_{21}\}, \\
T_2^{\pr}&=\{(0,x) \mid x \in T_2\} \cup \{(1,x) \mid x \in T_2^{(-1)}\}.
\end{align*}
By Example~\ref{exam-Teich}, we know that $S_2^{\pr}$ and $T_2^{\pr}$ form a primitive formally dual pair in $\Z_2 \times \Z_4^{2}$. It is easy to verify that $\chi^2(T_2) \in \R$ for each $\chi \in \wh{\Z_4^2}$. Define
\begin{align*}
S^{\pr\pr}&=\{(0,x_1,x_2) \mid (x_1,x_2) \in S_0^{\pr\pr}\} \cup \{(1,x_1,x_2) \mid (x_1,x_2) \in S_1^{\pr\pr}\}, \\
T^{\pr\pr}&=\{(0,x_1,x_2) \mid (x_1,x_2) \in T_1 \times T_2\} \cup \{(1,x_1,x_2) \mid (x_1,x_2) \in T_1^{(-1)} \times T_2^{(-1)}\},
\end{align*}
where
\begin{align*}
S_0^{\pr\pr}&=(S_{10} \times S_{20}) \cup (S_{11} \times S_{21})=\sum_{i=0}^{\frac{k+1}{2}} E_{k+1,2i}, \\
S_1^{\pr\pr}&=(S_{10} \times S_{21}) \cup (S_{11} \times S_{20})=\sum_{i=0}^{\frac{k-1}{2}} E_{k+1,2i+1}.
\end{align*}
Then, by Theorem~\ref{thm-recur}, we know that $S^{\pr\pr}$ and $T^{\pr\pr}$ form a primitive formally dual pair in $\Z_2 \times \Z_4^{2k+2}$. Moreover, by Theorem~\ref{thm-recur} and Lemma~\ref{lem-TTdiff2}, we have
\begin{align*}
\theta(T^{\pr\pr},\Z_2 \times \Z_4^{2k+2})&=2\cdot 4^{2k+2}-|\{T_1T_1^{(-1)}\}|\cdot|\{T_2T_2^{(-1)}\}|-|\{T_1T_1+T_1^{(-1)}T_1^{(-1)}\}|\cdot |\{T_2T_2\}| \\
&=2\cdot4^{2k+2}-13^k\cdot13-10^k\cdot10\\
&=2^{4k+5}-13^{k+1}-10^{k+1}.
\end{align*}
Hence, we have shown that the conclusion of Theorem~\ref{thm-dircon2} is true for $m=k+1$ when $k$ is odd.

Second, assume that $k$ is even. Using a similar argument as in the $k$ odd case, we can show that the conclusion of Theorem~\ref{thm-dircon2} is true for $m=k+1$ when $k$ is even.
\end{proof}

Employing the recursive construction framework \eqref{eqn-recur}, we can derive more primitive formally dual pairs in $\Z_2 \times \Z_4^{2m}$, by combining the two constructions of Theorems~\ref{thm-dircon1} and~\ref{thm-dircon2}.

\begin{theorem}\label{thm-mix}
Let $m$ be a positive integer, satisfying $m=m_1+m_2$, where $0 \le m_1,m_2 \le m$. Let $S_1=T_1=\prod_{j=1}^{2m_1} J$. Define
$$
S_{10}=\sum_{\substack{0 \le i \le 2m_1 \\ i \equiv 0,1 \bmod 4}}B_{m_1,i}, \quad S_{11}=\sum_{\substack{0 \le i \le 2m_1 \\ i \equiv 2,3 \bmod 4}}B_{m_1,i}
$$
which form a partition of $S_1$. Let $S_2=T_2=\prod_{j=1}^{m_2} L$. Define
$$
S_{20}=\begin{cases}
  \sum_{i=0}^{\frac{m_2-1}{2}} E_{m_2,2i+1} & \mbox{if $m_2$ is odd,} \\
  \sum_{i=0}^{\frac{m_2}{2}} E_{m_2,2i} & \mbox{if $m_2$ is even.}
  \end{cases}
$$
and
$$
S_{21}=\begin{cases}
  \sum_{i=0}^{\frac{m_2-1}{2}} E_{m_2,2i} & \mbox{if $m_2$ is odd,} \\
  \sum_{i=0}^{\frac{m_2}{2}-1} E_{m_2,2i+1} & \mbox{if $m_2$ is even.}
  \end{cases}
$$
which form a partition of $S_2$. Define two subsets of $\Z_2 \times \Z_4^{2m}$ as
\begin{align*}
S^\pr&=\{(0,x_1,x_2) \mid (x_1,x_2) \in S_0^\pr\} \cup \{(1,x_1,x_2) \mid (x_1,x_2) \in S_1^\pr\}, \\
T^\pr&=\{(0,x_1,x_2) \mid (x_1,x_2) \in T_1 \times T_2\} \cup \{(1,x_1,x_2) \mid (x_1,x_2) \in T_1^{(-1)} \times T_2^{(-1)}\},
\end{align*}
where
\begin{align*}
S_0^\pr&=(S_{10} \times S_{20}) \cup (S_{11} \times S_{21}), \\
S_1^\pr&=(S_{10} \times S_{21}) \cup (S_{11} \times S_{20}).
\end{align*}
Then $S^\pr$ and $T^\pr$ form a primitive formally dual pair in $\Z_2 \times \Z_4^{2m}$. Moreover,
$$
\theta(T^{\pr},\Z_2 \times \Z_4^{2m})=2^{4m+1}-3^{2m_1}13^{m_2}-(2\cdot3^{2m_1}-2^{2m_1})10^{m_2}.
$$
\end{theorem}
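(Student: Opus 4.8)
The plan is to read Theorem~\ref{thm-mix} as a single application of the recursive framework of Theorem~\ref{thm-recur}, taking the first factor to be the $J$-construction of Theorem~\ref{thm-dircon1} and the second factor to be the $L$-construction of Theorem~\ref{thm-dircon2}. Concretely, I would set $G_1=\Z_4^{2m_1}$ with $S_1=T_1=\prod_{j=1}^{2m_1}J$ and partition $S_{10},S_{11}$, and $G_2=\Z_4^{2m_2}$ with $S_2=T_2=\prod_{j=1}^{m_2}L$ and partition $S_{20},S_{21}$. Since $m_1+m_2=m$ we have $G_1\times G_2=\Z_4^{2m}$, and one checks that the sets $S^\pr,T^\pr$ of the statement coincide with the sets produced by \eqref{eqn-recur}. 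The two standing hypotheses of Theorem~\ref{thm-recur}, namely that $S_i^\pr$ and $T_i^\pr$ form a primitive formally dual pair in $\Z_2\times G_i$, are precisely Theorem~\ref{thm-dircon1} (with $m$ replaced by $m_1$) and Theorem~\ref{thm-dircon2} (with $m$ replaced by $m_2$). For the main argument I would assume $m_1,m_2\ge 1$; the degenerate cases $m_1=0$ and $m_2=0$ reduce directly to Theorem~\ref{thm-dircon2} and Theorem~\ref{thm-dircon1}, and the claimed formula specializes correctly in both.

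The heart of the proof is the dichotomy condition of Theorem~\ref{thm-recur}, and the key observation I would emphasize is that one is forced to use its second alternative. Indeed, condition (1) fails in general: since $\chi_y(J)\in\{1+\sqm,\,1-\sqm,\,2,\,0\}$, squaring gives $\chi_y(J)^2\in\{2\sqm,\,-2\sqm,\,4,\,0\}$, so already a single coordinate with $z_i=1$ makes $\chi_{1,z_1}^2(T_1)$ purely imaginary rather than real. Hence the $L$-family must carry the reality, and I would verify condition (2) as follows. A character of $G_2=\Z_4^{2m_2}$ factors through the $m_2$ blocks, so $\chi_{2,z_2}^2(T_2)=\prod_{j}\chi^{(j)\,2}(L)$ where each $\chi^{(j)}$ ranges over $\wh{\Z_4^2}$. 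As recorded in Example~\ref{exam-Teich}, every $\chi(L)$ with $\chi\in\wh{\Z_4^2}$ is either real or purely imaginary, so $\chi^{(j)\,2}(L)\in\R$; a product of real numbers is real, whence $\chi_{2,z_2}^2(T_2)\in\R$ for all $z_2$. With condition (2) in hand, Theorem~\ref{thm-recur} immediately gives that $S^\pr$ and $T^\pr$ form a primitive formally dual pair in $\Z_2\times\Z_4^{2m}$.

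It remains to evaluate $\theta(T^\pr,\Z_2\times\Z_4^{2m})$ using the closed form supplied by Theorem~\ref{thm-recur},
\[
\theta=2|G_1||G_2|-|\{T_1T_1^{(-1)}\}|\cdot|\{T_2T_2^{(-1)}\}|-|\{T_1T_1+T_1^{(-1)}T_1^{(-1)}\}|\cdot|\{T_2T_2\}|.
\]
Here $2|G_1||G_2|=2\cdot 4^{2m}=2^{4m+1}$. For the $J$-factor I would read the support sizes off the difference spectra already computed in Section~\ref{sec4}: from \eqref{eqn-eqn3} the number of nonzero entries of $T_1T_1^{(-1)}$ is $\sum_{l=0}^{2m_1}2^{2m_1-l}\binom{2m_1}{l}=3^{2m_1}$, so $|\{T_1T_1^{(-1)}\}|=3^{2m_1}$, while the zero-count $4^{2m_1}-2\cdot3^{2m_1}+2^{2m_1}$ in \eqref{eqn-eqn6} gives $|\{T_1T_1+T_1^{(-1)}T_1^{(-1)}\}|=2\cdot3^{2m_1}-2^{2m_1}$. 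For the $L$-factor, Lemma~\ref{lem-TTdiff2} gives $|\{T_2T_2^{(-1)}\}|=13^{m_2}$ and $|\{T_2T_2\}|=10^{m_2}$. Substituting these four support sizes produces $\theta=2^{4m+1}-3^{2m_1}13^{m_2}-(2\cdot3^{2m_1}-2^{2m_1})10^{m_2}$, as claimed.

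I expect the only genuine obstacle to be the reality verification of condition (2): everything else is bookkeeping that matches the statement to Theorem~\ref{thm-recur} and reuses the spectral data already established for the two constituent families. The subtle point worth stressing is that the asymmetry in the roles of $G_1$ and $G_2$ is essential, since the $J$-construction cannot satisfy the reality condition and the $L$-construction must therefore occupy the second slot.
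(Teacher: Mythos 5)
Your proposal is correct and follows essentially the same route as the paper's proof: a single application of Theorem~\ref{thm-recur} with the two constituent families from Theorems~\ref{thm-dircon1} and~\ref{thm-dircon2}, verification of condition (2) for the $L$-factor, and the $\theta$-formula obtained from the support sizes in \eqref{eqn-eqn3}, \eqref{eqn-eqn6} and Lemma~\ref{lem-TTdiff2}. The only cosmetic differences are that the paper checks reality via $\chi^2(T_2)=\chi(LL)^{m_2}$ rather than per-block values of $\chi(L)$ (both work, though Example~\ref{exam-Teich} does not actually record the character values you attribute to it --- the fact is trivially verified directly), and your observation that condition (1) necessarily fails for the $J$-factor is a nice addition not made explicit in the paper.
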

\begin{proof}
Define
\begin{align*}
\wti{S_1}&=\{(0,x) \mid x \in S_{10}\} \cup \{(1,x) \mid x \in S_{11}\}, \\
\wti{T_1}&=\{(0,x) \mid x \in T_1\} \cup \{(1,x) \mid x \in T_1^{(-1)}\},
\end{align*}
and
\begin{align*}
\wti{S_2}&=\{(0,x) \mid x \in S_{20}\} \cup \{(1,x) \mid x \in S_{21}\}, \\
\wti{T_2}&=\{(0,x) \mid x \in T_2\} \cup \{(1,x) \mid x \in T_2^{(-1)}\}.
\end{align*}
By Theorem~\ref{thm-dircon1}, $\wti{S_1}$ and $\wti{T_1}$ form a primitive formally dual pair in $\Z_2 \times \Z_4^{2m_1}$. By Theorem~\ref{thm-dircon2}, $\wti{S_2}$ and $\wti{T_2}$ form a primitive formally dual pair in $\Z_2 \times \Z_4^{2m_2}$. Note that for each $\chi \in \wh{\Z_4^{2m_2}}$, we have $\chi^2(T_2)=\chi(T_2T_2)=\chi(LL)^{m_2}$. Thus, it is easy to see that $\chi^2(T_2) \in \R$ for each $\chi \in \wh{\Z_4^{2m_2}}$. By Theorem~\ref{thm-recur}, we conclude that $S^\pr$ and $T^\pr$ form a primitive formally dual pair in $\Z_2 \times \Z_4^{2m}$. By \eqref{eqn-eqn3} and \eqref{eqn-eqn6}, we have
$$
|\{T_1T_1^{(-1)}\}|=3^{2m_1}, \quad |\{T_1T_1+T_1^{(-1)}T_1^{(-1)}\}|=2\cdot3^{2m_1}-2^{2m_1}.
$$
Furthermore, together with Theorem~\ref{thm-recur} and Lemma~\ref{lem-TTdiff2}, we have
\begin{align*}
\theta(T^{\pr},\Z_2 \times \Z_4^{2m})&=2\cdot 4^{2m}-|\{T_1T_1^{(-1)}\}|\cdot|\{T_2T_2^{(-1)}\}|-|\{T_1T_1+T_1^{(-1)}T_1^{(-1)}\}|\cdot |\{T_2T_2\}|\\
&=2^{4m+1}-3^{2m_1}13^{m_2}-(2\cdot3^{2m_1}-2^{2m_1})10^{m_2}.
\end{align*}
\end{proof}

\begin{remark}
In Theorem~\ref{thm-mix}, we allow that $m_1=0$ or $m_2=0$. Indeed, if $m_1=0$, we reproduce the primitive formally dual pairs in Theorem~\ref{thm-dircon2}, and if $m_2=0$, we reproduce the primitive formally dual pairs in Theorem~\ref{thm-dircon1}.
\end{remark}

Finally, we note that Theorem~\ref{thm-mix} leads to a series of inequivalent primitive formally dual pairs in $\Z_2 \times \Z_4^{2m}$.

\begin{theorem}
For $m \ge 2$, there exist at least $m+1$ pairwise inequivalent primitive formally dual pairs in $\Z_2 \times \Z_4^{2m}$.
\end{theorem}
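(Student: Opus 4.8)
The plan is to apply Theorem~\ref{thm-mix} to each of the $m+1$ admissible parameter choices $(m_1,m_2)\in\{(m,0),(m-1,1),\ldots,(0,m)\}$, which produces $m+1$ primitive formally dual pairs $S^\pr,T^\pr$ in $\Z_2\times\Z_4^{2m}$. For each choice Theorem~\ref{thm-mix} supplies the closed form
\begin{equation*}
\theta_{m_2}:=\theta(T^\pr,\Z_2\times\Z_4^{2m})=2^{4m+1}-3^{2m_1}13^{m_2}-(2\cdot3^{2m_1}-2^{2m_1})10^{m_2},\qquad m_1=m-m_2.
\end{equation*}
I would then argue that distinct values of $\theta_{m_2}$ force pairwise inequivalence of the corresponding pairs, and finally check that these $m+1$ values are indeed distinct.

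The first point is to see that $\theta$ separates the pairs. Recall $\theta$ counts the zeros in the difference spectrum of $T^\pr$, hence is an invariant of the formally dual set $T^\pr$ under equivalence. In all these constructions $|S^\pr|=2^{2m}$ while $|T^\pr|=2^{2m+1}$, so by cardinality no $S^\pr$-set can be equivalent to any $T^\pr$-set; thus two of our pairs can be equivalent only if either their $T^\pr$-sets or their $S^\pr$-sets are equivalent. The first possibility is ruled out immediately by $\theta_a\ne\theta_b$. For the second, Definition~\ref{def-iso} gives $\nu_{T^\pr}(y)=\frac{|T^\pr|}{|S^\pr|^2}|\chi_y(S^\pr)|^2$ for every $y$, so the difference spectrum of $T^\pr$, and in particular $\theta$, is determined by the character spectrum of $S^\pr$, which is itself an equivalence invariant of $S^\pr$. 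Hence equivalent $S^\pr$-sets would also force $\theta_a=\theta_b$. Consequently $\theta_a\ne\theta_b$ rules out both cases, so $\theta$ is a genuine invariant of the equivalence class of the pair.

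It then remains to show that the numbers $\theta_{m_2}$, $0\le m_2\le m$, are pairwise distinct. Writing $k=m_2$ and $f(k)=9^{m-k}13^k+2\cdot9^{m-k}10^k-4^{m-k}10^k$, so that $\theta_k=2^{4m+1}-f(k)$, I would prove that $f$ is strictly increasing on $\{0,1,\ldots,m\}$. A short computation gives
\begin{equation*}
f(k+1)-f(k)=9^{m-k-1}\big(4\cdot13^k+2\cdot10^k\big)-6\cdot4^{m-k-1}10^k,
\end{equation*}
and dividing by $4^{m-k-1}10^k$ reduces the claim $f(k+1)>f(k)$ to $(9/4)^{m-k-1}\big(4(13/10)^k+2\big)>6$. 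For $m\ge2$ and $0\le k\le m-1$ this holds because $(9/4)^{m-k-1}\ge1$ and $4(13/10)^k+2\ge6$, with at least one of the two inequalities strict (the first when $k=0$, the second when $k\ge1$). Therefore $\theta_0>\theta_1>\cdots>\theta_m$ are $m+1$ distinct values, giving the desired $m+1$ pairwise inequivalent primitive formally dual pairs. The only genuine computation is this elementary monotonicity estimate; the subtler conceptual point, handled in the previous paragraph, is that $\theta$ is a well-defined invariant of the \emph{pair} even though it is computed from a single set.
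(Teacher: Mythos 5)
Your proposal is correct and follows the same skeleton as the paper's proof: apply Theorem~\ref{thm-mix} to the $m+1$ decompositions $m=m_1+m_2$, use $\theta(T^\pr,\Z_2\times\Z_4^{2m})$ as the separating invariant, and verify the $m+1$ values are distinct. The one genuine difference lies in how distinctness is established, and your route is cleaner. The paper views $\theta$ as a real function $f_m(x)=2^{4m+1}-(9/13)^x13^m-\bigl(2(9/10)^x-(2/5)^x\bigr)10^m$ of $x=m_1$, checks $m=2$ by direct evaluation ($\theta\in\{243,255,285\}$), and for $m\ge3$ asserts monotonicity ``by considering the derivative'' without details. The case split is forced: for $m=2$ the continuous function is \emph{not} monotone (one computes $f_2'(0)=\ln(13/9)\cdot169+\bigl(\ln(100/81)-\ln(5/2)\bigr)\cdot100<0$), so the derivative argument genuinely fails there even though the three integer values happen to be increasing. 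Your discrete estimate
$f(k+1)-f(k)=9^{m-k-1}\bigl(4\cdot13^k+2\cdot10^k\bigr)-6\cdot4^{m-k-1}10^k$, reduced to $(9/4)^{m-k-1}\bigl(4(13/10)^k+2\bigr)>6$, sidesteps this entirely and covers all $m\ge2$ uniformly; it also correctly degenerates to equality exactly at $m=1$, $k=0$, matching the paper's remark that the two constructions are equivalent for $m=1$. I verified your difference computation and the strictness bookkeeping (first factor strict for $k\le m-2$, second for $k\ge1$, so at least one is strict whenever $m\ge2$). A second, smaller improvement: since Definition~\ref{def-equiv} declares pairs equivalent when \emph{one} of $S,T$ is equivalent to \emph{one} of $S^\pr,T^\pr$, inequivalence requires ruling out the cross cases $S\sim T^\pr$ as well; you do this explicitly via $|S^\pr|=2^{2m}\ne2^{2m+1}=|T^\pr|$, whereas the paper leaves it implicit and only addresses $T\sim T^\pr$ (distinct difference spectra) and $S\sim S^\pr$ (distinct character spectra, via $\nu_{T^\pr}(y)=\frac{|T^\pr|}{|S^\pr|^2}|\chi_y(S^\pr)|^2$, exactly as in your argument). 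In short: same strategy, but your monotonicity step is more elementary and uniform, and your equivalence bookkeeping is more complete.
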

\begin{proof}
Given a positive integer $m \ge 2$, there are $m+1$ different ways to write $m=m_1+m_2$, where $0 \le m_1,m_2 \le m$. Applying Theorem~\ref{thm-mix}, we obtain $m+1$ distinct primitive formally dual pairs in $\Z_2 \times \Z_4^{2m}$, where
$$
\theta(T^{\pr},\Z_2\times \Z_4^{2m})=2^{4m+1}-\left(\frac{9}{13}\right)^{m_1}13^m-\left(2\left(\frac{9}{10}\right)^{m_1}-\left(\frac{2}{5}\right)^{m_1}\right)10^m.
$$
When $m=2$, we have $m_1 \in \{0,1,2\}$ and correspondingly $\theta(T^{\pr},\Z_2\times \Z_4^{4}) \in \{243,255,285\}$. When $m \ge 3$, it is easy to verify that the function
$$
f_m(x)=2^{4m+1}-\left(\frac{9}{13}\right)^{x}13^m-\left(2\left(\frac{9}{10}\right)^{x}-\left(\frac{2}{5}\right)^{x}\right)10^m
$$
is strictly increasing in the interval $x \in [0,m]$, by considering the derivative of $f_m(x)$. Thus, when $m \ge 2$, for different choices of $m_1$, the numbers $\theta(T^{\pr},\Z_2\times \Z_4^{2m})$ are distinct, which implies that the subsets $T^\pr$ have distinct difference spectra and therefore are pairwise inequivalent. Moreover, by \eqref{eqn-def}, the number $\theta(T^{\pr},\Z_2\times \Z_4^{2m})$ is equal to the frequency of $0$ in the character spectrum of $S^{\pr}$. Hence, for different choices of $m_1$, the subsets $S^{\pr}$ have distinct character spectra and are pairwise inequivalent. Consequently, by Definition~\ref{def-equiv}, when $m \ge 2$, the $m+1$ primitive formally dual pairs in $\Z_2 \times \Z_4^{2m}$ are pairwise inequivalent. When $m=1$, applying Theorem~\ref{thm-mix} reproduces Examples~\ref{exam-TITO} and \ref{exam-Teich}, which are equivalent with each other.
\end{proof}


%


\section{Concluding Remarks}\label{sec7}

In this paper, we proposed a lifting construction framework and a recursive construction framework of primitive formally dual pairs. Applying the lifting construction framework, we obtained the first infinite family of primitive formally dual pairs in $\Z_2 \times \Z_4^{2m}$, having subsets with unequal sizes. Applying the recursive construction framework, we derived the second infinite family in $\Z_2 \times \Z_4^{2m}$ with the same property. Moreover, by combining these two families, the recursive construction framework generated more primitive formally dual pairs in $\Z_2 \times \Z_4^{2m}$. As a consequence, we showed that for $m \ge 2$, there exist at least $m+1$ pairwise inequivalent primitive formally dual pairs in $\Z_2 \times \Z_4^{2m}$. All primitive formally dual pairs constructed in this paper satisfy that the two subsets have unequal sizes. Prior to our work, there was only one single example of such primitive formally dual pair.

The formally dual pair indicates how one can form periodic configurations by taking the union of translations of a given lattice. In this sense, our new constructions of formally dual pairs lead to schemes generating candidates of energy-minimizing periodic configurations.

We think the approach proposed in this paper deserves further investigation. Below, we mention several natural problems which seem to be interesting.
\begin{itemize}
\item[(1)] Note that $J$ and $L$ are the Teichmuller sets of Galois rings $\GR(4,1)$ and $\GR(4,2)$, which are the fundamental building blocks of the constructions in Theorem~\ref{thm-dircon1} and Theorem~\ref{thm-dircon2}, respectively. A natural idea is to consider whether the Teichmuller set of a general Galois ring $\GR(p^t,n)$ can be used to construct new primitive formally dual pairs. In this direction, a series of fruitful insights into Galois rings \cite{CRX,HLX,Pol,Ya,YY} may be helpful.

\item[(2)] We remark that a Teichmuller set of the Galois ring $\GR(4,n)$ forms a relative difference set in the additive group $\Z_4^n$ of $\GR(4,n)$ (see \cite[Section 2]{Pott95} for an introduction to relative difference sets). We ask if relative difference sets other than those derived from Teichmuller sets can be used to generate new primitive formally dual pairs. In this sense, the constructions in Theorems~\ref{thm-dircon1}, ~\ref{thm-dircon2} and \ref{thm-mix} might just be part of a bigger picture.

\item[(3)] We think the proposed lifting construction framework and recursive construction framework are of great interest. It is worthwhile to consider whether these frameworks can be used to generate primitive formally dual pairs in finite abelian groups other than $\Z_2 \times \Z_4^{2m}$. We note that the lifting construction framework resembles the so called Waterloo decomposition of Singer difference sets \cite{ADJP,ADLM}. Moreover, it would be very nice if one can find a way to appreciate the recursive construction framework from the viewpoint of the recursive approach based on building sets \cite{DJ}.

\item[(4)] In \cite[Table A.1]{LPS}, all primitive formally dual sets in finite abelian groups of order at most $63$ were classified. The smallest open cases of formally dual pairs having subsets with unequal sizes live in finite abelian groups $G$ of order $64$, with $G \in \{ \Z_4 \times \Z_{16}, \Z_2^2 \times \Z_{16}, \Z_8^2, \Z_2 \times \Z_4 \times \Z_8, \Z_4^3 \}$, where the two subsets have size $4$ and $16$, respectively. One may expect that an exponent bound on the group containing primitive formal dual pairs will rule out the group like $\Z_2^2 \times \Z_{16}$. In this regard, some deep number-theoretic approach \cite{LS,S} may help.
\end{itemize}

\section*{Acknowledgement}

Shuxing Li is supported by the Alexander von Humboldt Foundation.

\end{document}